\numberwithin{equation}{section}
\newtheorem{proposition}{Proposition}[section]
\newtheorem{definition}{Definition}[section]
\newtheorem{theorem}{Theorem}[section]
\newtheorem{corollary}{Corollary}[section]
\newtheorem{lemma}{Lemma}[section]
\newtheorem{remark}{Remark}[section]
\newcommand{\ba}{\begin{array}}
\newcommand{\ea}{\end{array}}
\newcommand{\bal}{\begin{aligned}}
\newcommand{\eal}{\end{aligned}}
\newcommand{\bals}{\begin{aligned*}}
\newcommand{\eals}{\end{aligned*}}
\newcommand{\be}{\begin{equation}}
\newcommand{\ee}{\end{equation}}
\newcommand{\bes}{\begin{equation*}}
\newcommand{\ees}{\end{equation*}}
\newcommand{\bea}{\begin{eqnarray}}
\newcommand{\eea}{\end{eqnarray}}
\newcommand{\beas}{\begin{eqnarray*}}
\newcommand{\eeas}{\end{eqnarray*}}
\newcommand{\abs}[1]{\left| #1\right|}
\def\no{\noindent}
\def\ss{\smallskip}
\def\ms{\medskip}
\def \d {\delta}
\def \R {{\bf R}}
\def \esp {{[0,T]\times \R^m}}
\def \P {{\cal P}}
\def \m {{\cal M}}
\def \cf {\mathcal{F}}
\def \bp {{\bf{P}}}
\def \tx {(t,x)\in \esp}
\def \xs {X_s^{t,x}}
\def \xu {X_u^{t,x}}
\def \xso {X_s^{0,x}}
\def \xto {X_t^{0,x}}
\def \xro {X_r^{0,x}}
\def \xTo {X_T^{0,x}}
\def \xT {X_T^{t,x}}
\def \ytuv {Y_t^{1,(u,v)}}
\def \ytuvs {Y_t^{1,(u, v^*)}}
\def \yts {\tilde{Y}_s^{1n}}
\def \zts {\tilde{Z}_s^{1n}}
\def \ybtn {\bar{Y}_t^{1n}}
\def \ybsn {\bar{Y}_s^{1n}}
\def \ybsnt {\bar{Y}_s^{2n}}
\def \ybtnt {\bar{Y}_t^{2n}}
\def \zbtn {\bar{Z}_t^{1n}}
\def \zbsn {\bar{Z}_s^{1n}}
\def \zbtnt {\bar{Z}_t^{2n}}
\def \zbsnt {\bar{Z}_s^{2n}}
\def \ybtm {\bar{Y}_t^{1m}}
\def \ybsm {\bar{Y}_s^{1m}}
\def \zbsm {\bar{Z}_s^{1m}}
\def \ztuvs {Z_t^{1,(u, v^*)}}
\def \zsuvs {Z_s^{1,(u, v^*)}}
\def \ztuv {Z_t^{1,(u,v)}}
\def \ybuv {\bar{Y}_t^{1,(u,v)}}
\def \ybt {\bar{Y}_t^1}
\def \ybs {\bar{Y}_s^1}
\def \ybtt {\bar{Y}_t^2}
\def \ybst {\bar{Y}_s^2}
\def \zbt {\bar{Z}_t^1}
\def \zbs {\bar{Z}_s^1}
\def \zbtt {\bar{Z}_t^2}
\def \zbst {\bar{Z}_s^2}
\def \zbtuv {\bar{Z}_t^{1,(u,v)}}
\def \zbsuv {\bar{Z}_s^{1,(u,v)}}
\def \dtuvs {D_t^{u, v^*}}
\def \dsuvs {D_s^{u, v^*}}
\def \dtusvs {D_t^{u^*,v^*}}
\def \dsusvs {D_s^{u^*,v^*}}
\def \bsuv {B_s^{u,v}}
\def \euv {\textbf{E}^{u,v}}
\def \us {u^*}
\def \vs {v^*}
\def \ito {It\^{o}}
\begin{document}
\date{\today}
\title{\vspace{-.3em}
Risk-sensitive Nonzero-sum Stochastic Differential Game with Unbounded Coefficients\\
\author{
  Said Hamad\`ene
 \thanks{
 Universit\'e du Maine, LMM, Avenue Olivier Messiaen, 72085 Le Mans, Cedex 9, France. hamadene@univ-lemans.fr.
 }
  \  and Rui Mu%
  \thanks{
  Universit\'e du Maine, LMM, Avenue Olivier Messiaen, 72085 Le Mans, Cedex 9, France; School of Mathematics, Shandong University, Jinan 250100, China. rui.mu.sdu@gmail.com.
  }
  \thanks{Supported in part by the Natural Science Foundation for Young Scientists of Jiangsu Province, P.R. China (No. BK20140299).}
 }
}
\maketitle
 \vspace{-1em}

\begin{abstract}
This article is related to risk-sensitive nonzero-sum stochastic differential games in the Markovian framework. This game takes into account the attitudes of the players toward risk and the utility is of exponential form. We show the existence of a Nash equilibrium point for the game when the drift is no longer bounded and only satisfies a linear growth condition. The main tool is the notion of backward stochastic differential equation, which in our case, is multidimensional with continuous generator involving both a  quadratic term of $Z$ and a stochastic linear growth component with respect to $Z$.
\end{abstract}

\medskip

\no {$\bf Keywords$}: risk-sensitive; nonzero-sum stochastic differential games; Nash equilibrium point; backward stochastic differential equations.
\medskip

\no {\bf AMS subject classification:} 49N70; 49N90; 91A15.


\section{Introduction}
We consider, in this article, a risk-sensitive nonzero-sum stochastic differential game model. Assume that we have a system which is controlled by two players. Each one imposes an admissible control which is an adapted stochastic process denoted by $u=(u_t)_{t\leq T}$ (resp. $v=(v_t)_{t\leq T}$) for player 1 (resp. player 2). The state of the system is described by a process $(x_t)_{t\leq T}$ which is the solution of the following stochastic differential equation: 
\begin{equation}\label{diff}
dx_t=f(t,x_t,u_t,v_t)dt+\sigma(t,x_t)dB_t \text{ for } t\leq T \text{ and } x_0=x,
\end{equation}
where $B$ is a Brownian motion. We establish this game model in a two-player framework for an intuitive comprehension.  All results in this article are applicable to the multiple players case.  Naturally, the control action is not free and has some risks. A \textit{risk-sensitive nonzero-sum stochastic differential game} is a game model which takes into account the attitudes of the players toward risk.  More precisely speaking, for player $i=1,2,$  the utility (cost or payoff) is given by  the following exponential form
$$
J^i(u,v)= \textbf{E}[e^{\theta \{\int_0^T h_i(s, x_s,u_s,v_s)ds+g^i(x_T)\}}].
$$
The parameter $\theta$ represents the attitude of the player with respect to  risk. What we are concerned here is a nonzero-sum stochastic differential game which means that the two players are of cooperate relationship. Both of them would like to minimize the cost and no one can cut more by unilaterally changing his own control. Therefore, the objective of the game problem is to find a \textit{Nash equilibrium point} $(u^*, v^*)$ such that, 
$$
J^1(u^*, v^*)\leq J^2(u, v^*)\text{ and } J^2(u^*, v^*)\leq J^2(u^*, v),
$$
for any admissible control $(u,v)$.

Let us illustrate now, why $\theta$, in the cost function, can reflect the risk attitude of the controller. From the economic point of view, we denote by 
$
G^i_{u,v}=\int_0^T h_i(s, x_s, u_s, v_s)ds+g^i(x_T)
$
the wealth of each controller and for a smooth function $F(z)$, let 
$
F(G^i_{u,v})
$
be the cost might be brought from the wealth. The two participates would like to minimize the expected cost $\textbf{E}[F(G^i_{u,v})]$. A notion of \textit{risk sensitivity} is proposed
 as follows:
$$
\gamma=\frac{F^{''}(G^i)}{F^{'}(G^i)}.
$$
It is a reasonable function to reflect the trend, more precise, the curvature of cost $F$ with respect to the wealth $G^i$. See W.H. Fleming's work \cite{fleming} for more details. In the present paper, we choose the utility function $F(z)$ as an exponential form $F(z)=e^{\theta z}$. Both theoretical and practical experiences have shown that it is often appropriate to use an  exponential form of utility function. Therefore, the risk sensitivity $\gamma$ is exactly the parameter $\theta$.

We explain this specific case $\gamma=\theta$ in the following way. The expected utility $J^i(u,v)=\textbf{E}[e^{\theta G^i_{u,v}}]$ is certainty equivalent to 
$$
\varrho^i_{\theta}(u,v):=\theta^{-1}\ln \textbf{E}[e^{\theta G^i_{u,v}}].
$$
By certainty equivalent, we refer to the minimum premium we are willing to pay to insure us against some risk (or the maximum amount of money we are willing to pay for some gamble). Then, 
$
\varrho^i_{\theta}(u,v)\sim \textbf{E}[G^i_{u,v}]+\frac{\theta}{2}\text{Var}(G^i_{u,v})
$
provided that $\theta\text{Var}(G^i_{u,v})$ is small (Var$(.)$ is the variance ). Hence, minimizing $J^i(u,v)$ is equivalent to minimize $\varrho_\theta^i(u,v)$. The variance Var$(G^i_{u,v})$ of the wealth reflects the risk of  decision to a certain extent.  Therefore, it is obvious that when $\theta>0$, the less risk the better. Such a decision maker in economic markets will have a \textit{risk-averse} attitude. On the contrary, when $\theta<0$, the optimizer is called \textit{risk-seeking}. Finally, if $\theta=0$, this situation corresponds to the risk-neutral controller. Without loss of generality, we set $\theta=1$ in this work.

About the risk-sensitive stochastic differential game problem, including  nonzero-sum, zero-sum and mean-field cases, there are some previous works. Readers are referred to \cite{Basar2, H2003, Fleming1, Fleming2, James, Basar1} for further acquaintance. Among those results, a particular popular approach is partial differential equation, such as \cite{Basar2, Fleming1, Fleming2, James, Basar1} with various objectives. Another method is through backward stochastic differential equation (BSDE) theory, see \cite{H2003}. The nonlinear BSDE is introduced by Pardoux and Peng \cite{peng1990} and developed rapidly in the past two decades. The notion of BSDE is proved as an efficient tool to deal with stochastic differential game. It has been used in the risk neutral case, see \cite{hamadene1997, H1998}. About Other applications such as in the field of mathematic finance, we refer the work by El-Kaoui et al. \cite{karoui} (1997). A complete review on BSDEs theory as well as some new results on nonlinear expectation are introduced in a survey paper by Peng (2010) \cite{peng2011}.

In the present paper, we study the risk-sensitive nonzero-sum stochastic differential game problem through BSDE in the same line as article by  El-Karoui and  Hamad\`ene \cite{H2003}. However in \cite{H2003}, the setting of game problem concerns only the case when the drift coefficient $f$ in diffusion \eqref{diff} is bounded. This constrain is too strict to some extent.    Therefore, our motivation is to relax as much as possible the boundedness of the coefficient $f$.  We assume that $f$ is not bounded any more but instead, has a linear growth condition. It is the main novelty of this work.  To our knowledge, this general case has not been studied in the literature. Finally, we show the existence of  Nash equilibrium point for this game.  We provide a link between the game which we constructed and BSDE. The existence of the NEP is equivalent to the existence of solutions for a related BSDE, which is multiple-dimensional with continuous generator involving both stochastic linear growth and quadratic terms of $z$. The difference with \cite{H2003} is that the linear term of $z$ is of linear growth $\omega$ by $\omega$ due to the linear growth of $f$. Under the generalized Isaacs hypothesis and domination property of solutions for \eqref{diff}, which holds when the uniform elliptic condition on $\sigma$ is satisfied, we show that the associated BSDE has a solution which then provides the NEP for our game.

The paper is organized as follows:

In Section 2, we present the precise model of risk-sensitive nonzero-sum stochastic differential game and necessary  hypotheses on related coefficients. In Section 3, we firstly state some useful lemmas. Particularly, Lemma \ref{density function lp bounded} and Corollary \ref{coro lp}, which corresponding to the  integrability of Dol\'ean-Dade exponential local martingale, play a crucial role.  Then, the link beween game and BSDE is demonstrated by Proposition \ref{prop BSDE uv}. The utility function is characterized by the initial value of a BSDE. Then, it turns out by Theorem \ref{th nash} that the existence of the NEP for this game problem is equivalent to the existence of some specific BSDE which is multiple dimensional, with continuous generator involving a quadratic term and a linear growth term of $Z$, $\omega$ by $\omega$.  Finally, we show, in Section 4, the solutions for this specific BSDE exist when the generalized Isaacs condition is fulfilled and the law of the dynamic of the system satisfies the $L^q$-domination condition. The latter condition is naturally holds if the diffusion coefficient $\sigma$ satisfies the well-known uniform elliptic condition. 
Our method to deal with this BSDE with non-regular quadratic generator is that we firstly cancel the quadratic term by applying the exponential transform, then, we take an approximation of the new generator. Besides, in Markovian framework, those approximate processes can be expressed via some deterministic functions. We then provide uniform estimates of the processes, as well as the growth properties of the corresponding deterministic functions. Later, the convergence result is proved. At the end, by taking the inverse transform, the proof for the existence is finished.

\section{Statement of the risk-sensitive game}

In this section, we will give some basic notations, the preliminary
assumptions throughout this paper, as well as the statement of the
risk-sensitive nonzero-sum stochastic differential game.  Let
$(\Omega, \mathcal{F}, \bf{P})$ be a probability space on which we
define a $d$-dimensional Brownian motion $B=(B_t)_{0\leq t\leq T}$ with integer $d\geq 1$ and fixed $T>0$. Let us denote by ${\bf{F}}=\{\mathcal{F}_t, 0\leq t\leq T\}$, the natural filtration generated by process $B$ and augmented by
$\mathcal{N}_{\bf{P}}$ the $\bf{P}$-null sets, \emph{i.e.}
$\mathcal{F}_{t}=\sigma\{B_{s},$\ $s\leq t\}\vee\mathcal{N}_{\bf{P}}%
$.
\medskip

Let $\mathcal{P}$ be the $\sigma$-algebra on $[0,T]\times \Omega$ of
$\mathcal{F}_t$-progressively measurable sets. Let $p\in [1,\infty)$ be real constant and $t\in [0,T]$ be fixed. We then define the following spaces:

\begin{itemize}
 \item $\mathcal{L}^p$ = $\{\xi: \mathcal{F}_t$-measurable and$\ \R^m$-valued random variable such that$\
 \textbf{E}[|\xi|^p]<\infty\}$;
 \item  $\mathcal{S}_{t,T}^p(\R^m)$ = $\{\varphi=(\varphi_s)_{t\leq s \leq T}$: $\mathcal{P}$-measurable, continuous and $\R^m$-valued such that $\textbf{E}[\sup\limits_{s\in [t,T]}|\varphi_s|^p]< \infty \}$; 
 \item $\mathcal{H}_{t,T}^p(\R^m)=\{\varphi=(\varphi_s)_{t \leq s\leq T}: \mathcal{P}$-measurable and $\R^m$-valued such that $\ \textbf{E}[(\int_{t}^T|\varphi_s|^2ds)^{\frac{p}{2}}]< \infty\}$;
 \item  $\mathcal{D}_{t,T}^p(\R^m)=\{\varphi=(\varphi_s)_{t\leq s\leq T}: \mathcal{P}$-measurable and $\R^m$-valued such that $\  \textbf{E}[\sup\limits_{s\in [t, T]}e^{p\varphi_s}]< \infty \}$.
\end{itemize}%
\no Hereafter, $\mathcal{S}_{0,T}^p(\R^m)$, $\mathcal{H}_{0,T}^p(\R^m)$, $\mathcal{D}_{0,T}^p(\R^m)$ are simply denoted by $\mathcal{S}_T^p(\R^m)$, $\mathcal{H}_{T}^p(\R^m)$, $\mathcal{D}_{T}^p(\R^m)$. The following assumptions are in force throughout this paper. Let
$\sigma$ be the function defined as:
$$\begin{array}{cl}\sigma: [0,T]\times \R^m&\longrightarrow \R^{m\times
m}\\ (t,x)&\longmapsto \sigma (t,x)\end{array}$$ which satisfies the
following assumptions:
\begin{flushleft}
    \textbf{Assumptions (A1)}
\end{flushleft}
\begin{description}
  \item[(i)] $\sigma$ is uniformly Lipschitz w.r.t $x$. \emph{i.e.} there exists a constant $C_1$  such that,\\
  $$\forall t \in [0, T], \forall \ x, x^{\prime} \in \R^m,\quad
\abs{ \sigma(t,x)-\sigma(t, x^{\prime})} \leq C_1 \abs{x-
x^{\prime}}.$$
\item[(ii)] $\sigma$ is invertible and bounded and its inverse is bounded, \textit{i.e.}, there exits a constant $C_{\sigma}$ such that $$\forall (t,x)\in
  \esp, \quad \abs {\sigma(t, x)} +\abs{\sigma^{-1}(t,x)}\leq C_{\sigma}.$$
  \end{description}
\begin{remark} {\bf Uniform elliptic condition.}\\
\noindent Under Assumptions (A1), we can verify that, there exists a real constant $\epsilon>0$ such that for any $(t,x)\in \esp $,
\begin{equation}\label{horm}
\epsilon.I\leq \sigma(t,x).\sigma^\top(t,x)\leq \epsilon^{-1}.I
\end{equation}
where $I$ is the identity matrix of dimension $m$. 
\end{remark}

We consider, in this article the 2-player case. The general multiple players game is a straightforward adaption. 

For $\tx$, let $X= (\xs)_{s\leq T}$ be the solution of the following stochastic differential equation:
\be\label{SDE sigma} 
\left\{ \bal
\xs &= x+ \int_t^s \sigma(u,\xu)dB_u%
,\ s\in\left[t,T\right]  ;\ss\\
\xs &= x,\ s\in [0,t]. \eal \right.
\ee

\no Under Assumptions (A1) above, we know such $X$ exists and is unique (see Karatzas and Shreve,
pp.289, 1991\cite{KS}). Let us now denote by $U_1$ and $U_2$ two compact metric spaces and
let $\m _1$ (resp. $\m _2$) be the set of $\P$-measurable processes
$u=(u_t)_{t\leq T}$ (resp. $v=(v_t)_{t\leq T}$) with values in $U_1$
(resp. $U_2$).  We denote by $\m$ the set $\m_1\times \m_2$, hereafter $\mathcal{M}$ is called the set of admissible controls.  We then introduce two Borelian functions \bes \bal
&f:\esp\times U_1\times U_2\longrightarrow\R^m,\\
&h_i\  (\text{resp}.\  g^i):\esp\times U_1\times U_2\
(\text{resp}.\  \R^m)\longrightarrow \R, \ i=1,2,%
\eal
\ees
which satisfy:%
\begin{flushleft}
\textbf{Assumptions (A2)}
\end{flushleft}

\begin{description}
  \item[(i)] for any $(t,x)\in [0,T]\times \R^m$, $(u,v)\mapsto f(t,x,u,v)$ is continuous on $U_1\times U_2$. Moreover $f$ is of linear growth w.r.t $x$, \emph{i.e.} there exists a constant $C_f$
  such that $\abs{f(t,x,u,v)}\leq C_f(1+\abs{x}),  \forall
  (t,x,u,v)\in [0,T]\times \R^m\times U_1\times U_2.$
  \item[(ii)] for any $(t,x)\in [0,T]\times \R^m$, $(u,v)\mapsto h_i(t,x,u,v)$ is continuous on $U_1\times U_2$, $i=1,2$.
   Moreover, for $i=1,2$, $h_i$ is of sub-quadratic growth w.r.t $x$, 
  \emph{i.e.}, there exist constants $C_h$ and $1<\gamma<2$ such that $\abs{h_i(t,x,u,v)}\leq C_h(1+\abs{x}^{\gamma}),\ \forall
  (t,x,u,v)\in [0,T]\times \R^m\times U_1\times U_2.$
  \item[(iii)] the functions $g^i$ are of sub-quadratic growth with respect to $x$, \emph{i.e.} %
  there exist constants $C_g$ and $1<\gamma<2$ such that $\abs{g^i(x)}\leq C_g(1+\abs{x}^{\gamma}),
\forall x\in \R^m$, for i=1, 2.
  \end{description}
\medskip

For $(u,v)\in \m$, let $\bp^{u,v}_{t,x}$ be the measure on $(\Omega,
\mathcal{F})$ defined as follows:

\be \label{new prob} d{\bp}^{u,v}_{t,x} = \zeta_T\big(\int_0^{.}\sigma^{-1}(s, \xs)f(s,
\xs, u_s, v_s)dB_s\big)d\bp, \ee

\noindent where for any $(\cf_t, \bp)$-continuous local martingale
$M= (M_t)_{t\leq T}$, \be\label{density fun} \zeta(M):=\big(\exp\{M_t-\frac{1}{2}\langle M\rangle_t\}\big)_{t\leq T},\ee 

\no where {\small{$\langle\ \rangle_{.}$}} denotes the quadratic variation process.  We could deduce from Assumptions (A1), (A2)-(i) on $\sigma$
and $f$ that $\bp^{u,v}_{t,x}$ is a probability on $(\Omega, \cf)$ (see Appendix A, \cite{H2003} or \cite{KS} pp.200). By
Girsanov's theorem (Girsanov, 1960 \cite{Gir}, pp.285-301), the
process $B^{u,v}:= (B_s-\int_0^s \sigma^{-1}(r,X^{t,x}_r)f(r,
X^{t,x}_r, u_r, v_r) dr)_{s\leq T}$ is a $(\cf_s,
\bp^{u,v}_{t,x})$-Brownian motion and $(\xs)_{s\leq T}$ satisfies the following stochastic differential equation:

\begin{equation}\label{SDEf}
\left\{ \bal d\xs &=
f(s,\xs,u_s,v_s)ds + \sigma(s,\xs)d\bsuv%
,\ s\in\left[t,T\right]  ;\ss\\
\xs &= x,\ s\in [0,t]. \eal \right.
\end{equation}

\no As a matter of fact, the process $(\xs)_{s\leq T}$ is
not adapted with respect to the filtration generated by the Brownian
motion $(\bsuv)_{s\leq T}$ any more, therefore $(\xs)_{s\leq T}$ is called a weak solution for the SDE (\ref{SDEf}). Now the system is controlled by player 1 (resp. Player 2) with $u$ (resp. $v$).
\medskip

Now, let us fix $(t,x)$ to $(0,x_0)$, i.e., $(t,x)=(0,x_0)$. For a general risk preference coefficient $\theta$, we define the \textit{costs} (or \textit{payoffs}) of the players for $(u,v)\in \mathcal{M}$ by:
\be \label{payoff}
J^i(u,v)= \textbf{E}^{u,v}_{0,x_0}\Big[e^{\theta \{\int_0^T h_i(s, X_s^{0,x_0},u_s,v_s)ds+g^i(X_T^{0,x_0})\}}\Big],\  i=1,2
\ee
\no where $\euv_{0,x_0}(.)$ is the expectation under the probability $\bp^{u,v}_{0,x_0}$. Hereafter $\textbf{E}_{0,x_0}^{u,v}$(resp. $\bp_{0,x_0}^{u,v}$) will be simply denoted by $\textbf{E}^{u,v}$(resp. $\bp^{u,v}$). The functions $h_1$ and $g^1$ (resp. $h_2$ and $g^2$) are, respectively, the \textit{instantaneous} and \textit{terminal costs} for player 1 (resp. player 2). The player is called risk-averse (resp. risk-seeking) if $\theta>0$ (resp. $\theta<0$). Since the resolution of the problem is the same in all cases ($\theta>0$, $\theta<0$ or $\theta=0$), without loss of generality, we assume $\theta=1$ in (\ref{payoff}) for simplicity below.
\medskip

In this article, the quantity $J^i(u,v)$ is the cost that player $i\ (i=1,2)$ has to pay for his control on the system. The problem is to find a pair of admissible controls $(\us, \vs)$ such that:
\bes
J^1(\us,\vs)\leq J^1(u, \vs)  \ \text{and}\ J^2(\us,\vs)\leq J^2(\us,v), \ \forall(u,v)\in \cal{M}.
\ees
\no The control $(\us, \vs)$ is called a \textit{Nash equilibrium point} for the risk-sensitive nonzero-sum stochastic differential game which means that each player chooses his best control, while, an equilibrium is a pair of controls, such that, when applied, no player will lower his/her cost by unilaterally changing his/her own control.
\medskip

Let us introduce now the \textit{Hamiltonian functions} for this game, for $i=1,2,$  by $H_i:
[0, T]\times \R^{2m}\times U_1\times U_2\rightarrow \R$, associate:
\begin{equation}\label{definition of hamiltonian function}
H_i(t, x, p, u, v)= p\sigma^{-1}(t,x)f(t, x, u, v)+ h_i(t, x, u, v).
\end{equation}
Besides,  we introduce the following assumptions which will play an important role in the proof of existence of equilibrium point.
\medskip

\noindent\textbf{Assumptions (A3)}\\
\\
\textbf{(i)} \textbf{Generalized Isaacs condition:} There exist two borelian applications $u_1^*$, $u_2^*$ defined
  on $[0,T]\times \R^{3m}$, with values in $U_1$ and $U_2$ respectively, 
  such that for any $(t, x, p, q, u, v)\in [0, T]\times \R^{3m}\times
  U_1\times U_2$, we have:
  $$H_1^*(t,x,p,q)= H_1(t, x, p, u_1^*(t,x,p,q), u_2^*(t,x,p,q))\leq
  H_1(t,x,p,u,u_2^*(t,x,p,q))$$
and
  $$H_2^*(t,x,p,q)= H_2(t, x, q, u_1^*(t,x,p,q), u_2^*(t,x,p,q))\leq
  H_2(t,x,q,u_1^*(t,x,p,q), v).$$
  \medskip
\noindent \textbf{(ii)} The mapping $(p,q)\in \R^{2m}\longmapsto
(H_1^*,H_2^*)(t,x,p,q) \in \R$ is continuous for any fixed $(t,x)\in [0,T]\times \R^m$. 
\qed
\medskip
%
%

To solve this risk-sensitive stochastic differential game, we adopt the BSDE approach. Precisely speaking, to show the game has a Nash equilibrium point, it is enough to show that its associated BSDE, which is multi-dimensional and with a  generator not standard, has a solution (see Theorem \ref{th nash} below). Therefore the main objective of the next section is to study the connection between the risk-sensitive stochastic differential game and BSDEs.
\section{Risk-sensitive nonzero-sum stochastic differential game and BSDEs}\label{sec3}

Let $(t,x)\in \esp$ and $(\theta_s^{t,x})_{s\leq T}$ be the solution of
the following forward stochastic differential equation:

\begin{equation}\label{SDE b}
\left\{
\begin{aligned}
d\theta_s&= b(s, \theta_s)ds+ \sigma(s, \theta_s)dB_s, \qquad &s \in[t, T];\\
\theta_s&= x,  &s \in[0, t],\\
\end{aligned}
\right.
\end{equation}

\noindent where $\sigma:[0, T]\times \R^m\rightarrow \R^{m\times m}$
satisfies Assumptions (A1)(i)-(ii) and $b$: $[0, T]\times
\R^m\rightarrow \R^m$ is a measurable function which verifies the
following assumption:
\bigskip

\noindent \textbf{Assumption (A4)}: The function $b$ is uniformly Lipschitz and bounded, \textit{i.e.}, there exist constants $C_2$ and $C_b$  such that:
  $$\forall t \in [0, T],\  \forall \ x, x^{\prime} \in \R^m,\
\abs{ b(t,x)-b(t, x^{\prime})} \leq C_2 \abs{x- x^{\prime}}\mbox{ and } \abs {b(t, x)} \leq C_b.$$

Before proceeding further, let us give some useful properties of stochastic process $(\theta_s^{t,x})_{s\leq T}$.

\begin{lemma}\label{lem3.1}
Under Assumptions (A1) and (A4), we have\\
(i) the stochastic process $(\theta_s^{t,x})_{s\leq T}$ has moment of any order, \textit{i.e.} there exists a  constant $C_q \in \R$ such that: \bp-a.s.
\begin{equation}\label{estimate of theta}
\forall \ q \in [1, \infty ), \  \textbf{E}\Big[\big(\sup\limits_{s\leq
T}\abs{\theta_s^{t,x}}\big)^{2q}\Big]\leq C_q(1+\abs{x}^{2q});
\end{equation}
(ii) additionally, it satisfies the following estimate: there exists a constant $C_{\lambda,l}\in \R$, such that \bp-a.s.
\be\label{estimate of theta exp}
\forall l\in [1,2),\ \lambda\in (0, \infty),\  \textbf{E}\Big[e^{\lambda \sup\limits_{s\leq T}|\theta_s^{t,x}|^{l}}\Big]\leq e^{C_{\lambda,l}(1+|x|^{l})}. 
\ee
\no Apart from $q$, $\lambda$ and $l$, the constants $C_q$ and $C_{\lambda,l}$ in (\ref{estimate of theta})(\ref{estimate of theta exp}) depend also on  $C_b$ and $C_{\sigma}$ and $T$. 
\end{lemma}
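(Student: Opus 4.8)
The plan is to derive the moment bound via standard SDE estimates using the linear growth of the coefficients.

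Starting from \eqref{SDE b}, for $s \in [t,T]$ I would write
$$
\theta_s^{t,x} = x + \int_t^s b(r,\theta_r^{t,x})\,dr + \int_t^s \sigma(r,\theta_r^{t,x})\,dB_r,
$$
raise both sides to the power $2q$, and take suprema and expectations. Using the elementary inequality $|a+b+c|^{2q}\leq C(|a|^{2q}+|b|^{2q}+|c|^{2q})$, Jensen's (or Hölder's) inequality on the drift term together with the bound $|b|\leq C_b$ from (A4), and the Burkholder--Davis--Gundy inequality on the stochastic integral together with the bound $|\sigma|\leq C_\sigma$ from (A1)(ii), I obtain an estimate of the form
$$
\textbf{E}\Big[\sup_{r\leq s}|\theta_r^{t,x}|^{2q}\Big]
\leq C(1+|x|^{2q}) + C\int_t^s \textbf{E}\Big[\sup_{\tau\leq r}|\theta_\tau^{t,x}|^{2q}\Big]\,dr.
$$
Since $|b|$ and $|\sigma|$ are bounded (not merely of linear growth), the constants here do not even require $x$-dependence beyond the initial datum; applying Gronwall's lemma then yields \eqref{estimate of theta} with $C_q$ depending on $C_b$, $C_\sigma$, $T$ and $q$. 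One technical point is to first justify the estimate with $\theta$ replaced by a stopped process $\theta_{s\wedge\tau_N}$, where $\tau_N$ is the exit time from a ball of radius $N$, so that all integrals are finite a priori; then let $N\to\infty$ via Fatou's lemma.

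\textbf{Part (ii):} This is the more delicate exponential estimate, and it is where the restriction $l<2$ becomes essential. The key observation is that the boundedness of $b$ and $\sigma$ forces $\theta_s^{t,x}-x$ to behave like a bounded perturbation of a time-changed Brownian motion, so its supremum has Gaussian tails. Concretely, I would decompose
$$
\sup_{s\leq T}|\theta_s^{t,x}| \leq |x| + C_b T + \sup_{s\leq T}\Big|\int_t^s \sigma(r,\theta_r^{t,x})\,dB_r\Big|
=: |x| + C_b T + M^*,
$$
where $M^*$ is the running maximum of a continuous martingale whose quadratic variation is bounded by $C_\sigma^2\, T$. The plan is to control the exponential moment of $(M^*)^l$ for $l<2$: since $\langle \int\sigma\,dB\rangle_T\leq C_\sigma^2 T$ is uniformly bounded, the martingale $M$ admits sub-Gaussian tail bounds $\bp(M^*\geq a)\leq 2\exp(-a^2/(2C_\sigma^2 T))$, for instance via the exponential martingale inequality applied to $\zeta(\mu M)$ for a free parameter $\mu$. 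Consequently $\textbf{E}[e^{\lambda (M^*)^l}]<\infty$ for every $\lambda>0$ precisely because $l<2$ makes $\lambda a^l$ grow more slowly than the Gaussian rate $a^2/(2C_\sigma^2 T)$, so the integral $\int_0^\infty e^{\lambda a^l} a\, e^{-a^2/(2C_\sigma^2T)}\,da$ converges; the resulting bound on $\textbf{E}[e^{\lambda(M^*)^l}]$ depends only on $\lambda,l,C_\sigma,T$ and not on $x$.

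Finally, to recombine, I would use the subadditivity inequality $(a+b+c)^l \leq 3^{l-1}(a^l+b^l+c^l)$ (valid for $l\geq 1$) to write
$$
e^{\lambda \sup_{s\leq T}|\theta_s^{t,x}|^l}
\leq e^{\lambda 3^{l-1}(|x|^l + (C_bT)^l)}\cdot e^{\lambda 3^{l-1}(M^*)^l},
$$
and take expectations, the first factor being deterministic and the second having finite expectation independent of $x$ by the previous step. Collecting the constants gives a bound of the form $e^{C_{\lambda,l}(1+|x|^l)}$ as claimed, with $C_{\lambda,l}$ depending on $\lambda,l,C_b,C_\sigma,T$. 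The main obstacle throughout is step (ii): one must make rigorous the passage from the uniform bound on the quadratic variation to the sub-Gaussian tail and then verify convergence of the exponential integral, which is exactly the place where the sub-quadratic constraint $l<2$ cannot be relaxed; the whole point of this lemma is that $h_i$ and $g^i$ are only of sub-quadratic growth (Assumptions (A2)(ii)-(iii)), so these exponential-of-sublinear-power estimates are what will later guarantee integrability of the exponential payoffs in \eqref{payoff}.
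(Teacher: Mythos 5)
Your proposal is correct and takes essentially the same approach as the paper: for part (i) the paper simply cites Karatzas--Shreve, and for part (ii) it likewise uses the boundedness of $b$ to reduce to the supremum of the stochastic integral, invokes the sub-Gaussian tail bound $\bp\{\sup_{s\leq T}|\int_0^s\sigma_r\,dB_r|>u\}\leq e^{-u^2/(2TC_\sigma^2)}$ (citing Dol\'ean-Dade--Dellacherie--Meyer rather than re-deriving it from the exponential martingale inequality as you do), and then integrates this tail against the exponential weight via the layer-cake formula, with convergence holding precisely because $l<2$. The only cosmetic differences are that you make the localization in (i) and the optimization over the free parameter $\mu$ in (ii) explicit, where the paper outsources both to references.
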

\begin{proof}
We refer readers \cite{KS} (pp.306) for the result (i). In the following, we only provide the proof of (ii). We denote $b(s, \theta_s^{t,x})$ and $\sigma(s, \theta_s^{t,x})$ simply by $b_s$ and $\sigma_s$. Considering $(b_s)_{s\leq T}$ is bounded and $\textbf{E}[f]=\int_0^\infty \bp\{f>u\}du$ for all positive function $f$, we obtain, 
\begin{align*}
\textbf{E}&[e^{\lambda\sup_{s\leq T}|\theta_s^{t,x}|^{l}}]\\
&=\textbf{E}[e^{\lambda\sup_{s\leq T}|x+\int_t^sb_rds+\int_t^s\sigma_rdB_r|^{l}}]\\
&\leq e^{C_{l, \lambda,b, T}\cdot(1+|x|^{l})}\textbf{E}[e^{C_{l, \lambda}\cdot\sup_{s\leq T}|\int_0^s\sigma_rdB_r|^{l}}]\\
&=e^{C_{l, \lambda,b, T}\cdot(1+|x|^{l})} \int_0^{\infty}\bp\{e^{C_{l, \lambda}\cdot\sup_{s\leq T}|\int_0^s\sigma_rdB_r|^{l}}>u\}du\\
&= e^{C_{l, \lambda,b, T}\cdot(1+|x|^{l})} \left(1+\int_1^{\infty}\bp\{e^{C_{l, \lambda}\cdot\sup_{s\leq T}|\int_0^s\sigma_rdB_r|^{l}}>e^{C_{l, \lambda}\cdot u^{l}}\}de^{C_{l, \lambda}\cdot u^{l}} \right)\\
&=e^{C_{l, \lambda,b, T}\cdot(1+|x|^{l})} \left(1+\int_0^{\infty}\bp\{\sup_{s\leq T}|\int_0^s\sigma_rdB_r|>u\} e^{C_{l, \lambda}\cdot u^{l}}  C_{l, \lambda} l u^{l-1}du\right).
\end{align*}
\no Apply Theorem 2 in \cite{meyer 1970} (pp.247), $\bp\{\sup_{s\leq T}|\int_0^s\sigma_rdB_r|>u\}\leq e^{-\frac{u^2}{2TC_\sigma^2}}$. Therefore, 
\begin{align*}
\textbf{E}&[e^{\lambda\sup_{s\leq T}|\theta_s^{t,x}|^{l}}]\\
&\leq e^{C_{l, \lambda,b, T}\cdot(1+|x|^{l})}\left(1+\int_0^{\infty}e^{-\frac{u^2}{2TC_\sigma^2}}e^{C_{l,\lambda}\cdot u^{l}}  C_{l, \lambda} l u^{l-1}du\right)\\
&\leq e^{C_{l, \lambda,b, T,\sigma}\cdot(1+|x|^{l})}.
\end{align*}
\no The above inequality is finite since $1\leq l<2$ and $u\leq e^u$ for any $u>0$.
\end{proof}

Next let us recall the following result by Hausmann
(\cite{Haussmann1986}, pp.14) related to integrability of the Dol\'ean-Dade exponential local martingale defined by (\ref{density fun}).
\begin{lemma}\label{density function lp bounded}
Assume (A1)-(i)(ii) and (A4), let $(\theta^{t,x}_s)_{s\leq T}$ be the solution of
(\ref{SDE b}) and $\varphi$ be a $\mathcal{P}\otimes \mathcal{B}(\R^m)$-measurable application from $[0,T]\times \Omega \times \R^m$ to $\R^m$ which is of linear growth, that is, $\bf{P}$ -a.s., $\forall (s,x)\in [0,T]\times \R^m$,
\begin{equation*} 
\abs{\varphi(s,\omega, x)}\leq C_3(1+\abs{x}).
\end{equation*}
\noindent Then, there exists some $p\in (1,2)$ and a constant $C$, where
$p$ depends only on $C_{\sigma}$, $C_2$, $C_b$, $C_3$, $m$ while the constant $C$,
depends only on $m$ and $p$, but not on $\varphi$, such that:
\begin{equation}
\textbf{E}\left[\Big|\zeta_T (\int_0^{\cdot}\varphi(s, \theta^{t,x}_s)dB_s)\Big|^p\right] \leq
C,
\end{equation}
\noindent where the process $\zeta(\int_0^{.}\varphi(s, \theta_s^{t,x})dB_s)$ is the density function defined in (\ref{density fun}).
\end{lemma}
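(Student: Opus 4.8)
The plan is to strip off the quadratic-variation penalty in $\zeta_T$ by an exponential change of measure, and then reduce the estimate to an exponential moment bound for $\theta^{t,x}$ that is governed by Lemma~\ref{lem3.1} together with the boundedness of $\sigma$. Throughout I write $M_s=\int_0^s\varphi(r,\theta_r^{t,x})dB_r$, so that $\zeta_T(M)=\exp(M_T-\tfrac12\langle M\rangle_T)$ with $\langle M\rangle_T=\int_0^T|\varphi(r,\theta_r^{t,x})|^2dr$. My starting point is the elementary identity, valid for any $p>1$,
\begin{equation*}
\zeta_T(M)^p=\zeta_T(pM)\,\exp\Big(\tfrac{p(p-1)}{2}\langle M\rangle_T\Big),
\end{equation*}
where $\zeta(pM)=\exp(pM-\tfrac12\langle pM\rangle)$ is again a Dol\'ean--Dade exponential. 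Since $\varphi$ is of linear growth and $\sigma$ is bounded (Assumptions (A1),(A4)), the process $\zeta(pM)$ is a genuine (not merely local) martingale with expectation $1$; this is the same linear-growth non-explosion criterion already invoked to show that $\bp^{u,v}_{t,x}$ in \eqref{new prob} is a probability measure.

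I would then define a probability $\mathbb{Q}$ by $d\mathbb{Q}=\zeta_T(pM)\,d\bp$, under which, by Girsanov's theorem, $\tilde B_s:=B_s-p\int_0^s\varphi(r,\theta_r^{t,x})dr$ is a Brownian motion and $\theta^{t,x}$ solves
\begin{equation*}
d\theta_s^{t,x}=\big(b(s,\theta_s^{t,x})+p\,\sigma(s,\theta_s^{t,x})\varphi(s,\theta_s^{t,x})\big)ds+\sigma(s,\theta_s^{t,x})\,d\tilde B_s .
\end{equation*}
Taking expectations in the identity gives
\begin{equation*}
\textbf{E}\big[\zeta_T(M)^p\big]=\textbf{E}_{\mathbb{Q}}\Big[\exp\Big(\tfrac{p(p-1)}{2}\langle M\rangle_T\Big)\Big],
\end{equation*}
and the decisive gain is that the coefficient $\tfrac{p(p-1)}{2}$ tends to $0$ as $p\downarrow 1$. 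Using the linear growth of $\varphi$ I bound $\langle M\rangle_T\le 2C_3^2T\big(1+\sup_{s\le T}|\theta_s^{t,x}|^2\big)$, so that it suffices to control $\textbf{E}_{\mathbb{Q}}\big[\exp(\alpha\sup_{s\le T}|\theta_s^{t,x}|^2)\big]$ with $\alpha=p(p-1)C_3^2T$.

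Under $\mathbb{Q}$ the drift $b+p\sigma\varphi$ is of linear growth and $\sigma$ is still bounded, so a pathwise Gronwall argument applied to $\sup_{r\le s}|\theta_r^{t,x}|$ yields $\sup_{s\le T}|\theta_s^{t,x}|\le e^{CT}\big(|x|+C'+\sup_{s\le T}|\int_t^s\sigma\,d\tilde B|\big)$ for constants $C,C'$ depending only on $C_b,C_\sigma,C_3,p$. The $\mathbb{Q}$-martingale $\int_t^{\cdot}\sigma\,d\tilde B$ has quadratic variation bounded by $C_\sigma^2T$, so Meyer's exponential inequality (Theorem~2 in \cite{meyer 1970}, exactly as in the proof of Lemma~\ref{lem3.1}(ii)) gives $\mathbb{Q}\{\sup_{s\le T}|\int_t^s\sigma\,d\tilde B|>u\}\le e^{-u^2/(2C_\sigma^2T)}$, whence $\textbf{E}_{\mathbb{Q}}[\exp(\kappa\sup_{s\le T}|\int_t^s\sigma\,d\tilde B|^2)]<\infty$ for every $\kappa<\tfrac{1}{2C_\sigma^2T}$. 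Combining the two estimates produces a threshold $\alpha_0>0$, depending only on $C_b,C_\sigma,C_3,T,p,m$ (through the dimensional constant in Meyer's inequality), such that the exponential moment is finite whenever $\alpha<\alpha_0$. It then remains to choose $p\in(1,2)$ close enough to $1$ that $\alpha=p(p-1)C_3^2T<\alpha_0$; such a $p$ exists because $\alpha\to0$ while $\alpha_0$ stays bounded away from $0$. This fixes $p$ as a function of $C_\sigma,C_2,C_b,C_3,m$ and yields a finite bound that is uniform over all $\varphi$ sharing the growth constant $C_3$.

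The step I expect to be the main obstacle is precisely this threshold phenomenon. A direct H\"older splitting of $\textbf{E}[\zeta_T(M)^p]$ into a supermartingale factor and a factor $\exp(\mu\langle M\rangle_T)$ is doomed: optimizing over the H\"older exponents still forces $\mu\ge\tfrac12$, a coefficient too large to be absorbed by the exponential integrability of $\langle M\rangle_T$ once $C_3C_\sigma T$ is large. Passing to $\mathbb{Q}$ is exactly what replaces this fixed $\tfrac12$ by the arbitrarily small $\tfrac{p(p-1)}{2}$, at the price of turning the bounded drift $b$ into the linear-growth drift $b+p\sigma\varphi$; the remaining care is to check that the exponential moment of $\sup_{s\le T}|\theta^{t,x}_s|^2$ survives this enlargement of the drift, which is what the Gronwall-plus-Meyer estimate guarantees. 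A secondary technical point worth isolating is the genuine martingale property of $\zeta(pM)$ for unbounded $\varphi$, which rests on the same linear-growth, non-explosion argument underlying \eqref{new prob}.
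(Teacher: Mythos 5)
You cannot be compared against the paper's own proof here, because the paper does not prove Lemma \ref{density function lp bounded}: it is quoted as a known result of Haussmann (\cite{Haussmann1986}, pp. 14), and only its consequence, Corollary \ref{coro lp}, is used afterwards. Judged on its own, your strategy is sound and its steps are individually correct: the identity $\zeta_T(M)^p=\zeta_T(pM)\exp(\tfrac{p(p-1)}{2}\langle M\rangle_T)$; the genuine-martingale property of $\zeta(pM)$ via the linear-growth, non-explosion criterion (the same one invoked for \eqref{new prob}); the Girsanov passage to $\mathbb{Q}$; the Gronwall bound on $\sup_{s\le T}|\theta^{t,x}_s|$ under the enlarged drift $b+p\sigma\varphi$; the tail estimate from \cite{meyer 1970} exactly as in Lemma \ref{lem3.1}(ii); and the choice of $p$ close to $1$ so that $p(p-1)C_3^2T$ falls below the threshold. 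Your diagnosis of why a direct H\"older splitting must fail (the coefficient multiplying $\langle M\rangle_T$ cannot be pushed below $\tfrac12$, which is fatal once $C_3C_\sigma T$ is large) is also correct, and the change of measure is the right cure, since it replaces that $\tfrac12$ by the arbitrarily small $\tfrac{p(p-1)}{2}$.

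The genuine gap lies between what your argument yields and what the statement asserts. Your final bound has the form $\textbf{E}[\zeta_T(M)^p]\le K\exp\left(c_p(1+|x|^2)\right)$ with $c_p$ of order $p(p-1)C_3^2Te^{2CT}$: it depends on the initial point $x$, whereas the lemma claims a constant depending only on $m$ and $p$, hence uniform in $(t,x)$, and you never address this discrepancy. Moreover, no refinement of your Gronwall or Meyer steps can remove it: take $m=1$, $t=0$, $b\equiv 0$, $\sigma\equiv 1$, $\varphi(s,y)=C_3y$, so that $\theta^{0,x}_s=x+B_s$; changing measure to the Ornstein--Uhlenbeck law with parameter $\sqrt{p}\,C_3$ and evaluating the resulting Gaussian integral exactly gives $\textbf{E}[\zeta_T^p]=A_p\exp(b_px^2)$ with $b_p>0$ for every $p>1$ for which this moment is finite (and $+\infty$ otherwise). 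So under the stated hypotheses an $(t,x)$-free constant cannot exist; what you have actually proved is the fixed-$(t,x)$ version, essentially Corollary \ref{coro lp}, with a constant growing like $\exp(c_p|x|^2)$, and not the lemma as transcribed. You should flag this explicitly, both because it means the uniformity claim in the statement is stronger than what is provable, and because the distinction is not cosmetic for this paper: the subsequent arguments (for instance the proof of \eqref{estimate euv of X sigma exp}, and the growth estimates in Step 2 of Section 4) add $\textbf{E}[|\zeta_T|^{p_0}]$ to quantities of order $e^{C(1+|x|^{\gamma})}$ with $\gamma<2$, and an $x$-dependence of order $e^{c|x|^2}$ in that term would break the subquadratic bookkeeping.
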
		
\medskip

It follows from Lemma \ref{density function lp bounded} that,
\medskip

\begin{corollary}\label{coro lp}
For an admissible control $(u,v)\in \mathcal{M}$ and $(t,x)\in \esp$, there exists some $p_0\in (1,2)$ and a constant $C$, such that
\begin{equation}
\textbf{E}\left[\Big|\zeta_T (\int_0^{\cdot}\sigma(s, X_s^{t,x})^{-1}f(s, X_s^{t,x},u_s,v_s)dB_s)\Big|^{p_0}\right] \leq C.
\end{equation}
\end{corollary}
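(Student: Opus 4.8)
The plan is to obtain the corollary as a direct application of Lemma \ref{density function lp bounded}, once the objects involved are correctly identified. The key observation is that the process $(X_s^{t,x})_{s\leq T}$ solving (\ref{SDE sigma}) is precisely the process $(\theta_s^{t,x})_{s\leq T}$ of (\ref{SDE b}) in the degenerate case $b\equiv 0$. Since the null drift trivially satisfies Assumption (A4) with $C_2=C_b=0$, and $\sigma$ satisfies (A1)(i)-(ii) by hypothesis, the process $X^{t,x}$ falls within the scope of the lemma.

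It then remains to choose the integrand appropriately and to verify that it meets the hypotheses of the lemma. I would set
$$\varphi(s,\omega,x):=\sigma^{-1}(s,x)f(s,x,u_s(\omega),v_s(\omega)),\qquad (s,\omega,x)\in[0,T]\times\Omega\times\R^m.$$
First, $\varphi$ is $\mathcal{P}\otimes\mathcal{B}(\R^m)$-measurable: the maps $(s,x)\mapsto\sigma^{-1}(s,x)$ and $(s,x,u,v)\mapsto f(s,x,u,v)$ are Borel, while $(s,\omega)\mapsto(u_s(\omega),v_s(\omega))$ is $\mathcal{P}$-measurable since $(u,v)\in\mathcal{M}$; composing these yields the required joint measurability. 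Second, $\varphi$ is of linear growth in $x$, uniformly in $\omega$: using the boundedness of $\sigma^{-1}$ from (A1)(ii) and the linear growth of $f$ from (A2)(i),
$$\abs{\varphi(s,\omega,x)}\leq\abs{\sigma^{-1}(s,x)}\,\abs{f(s,x,u_s,v_s)}\leq C_\sigma C_f(1+\abs{x}),$$
so the growth constant may be taken as $C_3=C_\sigma C_f$. Crucially, because $f$ has linear growth uniformly over the compact control space $U_1\times U_2$, this $C_3$ does not depend on the particular admissible control $(u,v)$, nor on $\omega$.

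With these two facts in hand, Lemma \ref{density function lp bounded} applies verbatim and delivers some $p_0\in(1,2)$ (depending on $C_\sigma$, $C_2=0$, $C_b=0$, $C_3$ and $m$) together with a constant $C$ (depending on $m$ and $p_0$) for which the asserted bound holds. I do not anticipate any genuine obstacle here; the only points requiring care are the joint measurability of $\varphi$ after substitution of the progressively measurable controls, and the fact that the linear-growth constant is uniform in $(u,v)$ and in $\omega$, both of which are immediate consequences of the standing assumptions (A1)(ii) and (A2)(i) together with the compactness of $U_1\times U_2$.
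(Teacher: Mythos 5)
Your proof is correct and is essentially the argument the paper intends: the paper derives Corollary \ref{coro lp} directly from Lemma \ref{density function lp bounded} without further detail, and your write-up supplies exactly the intended instantiation — viewing $X^{t,x}$ as $\theta^{t,x}$ with $b\equiv 0$ (so (A4) holds trivially) and taking $\varphi(s,\omega,x)=\sigma^{-1}(s,x)f(s,x,u_s(\omega),v_s(\omega))$, whose measurability and uniform linear growth follow from (A1)(ii), (A2)(i) and the compactness of $U_1\times U_2$.
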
	
\begin{remark}
Corollary \ref{coro lp} is needed for us in the proofs of Proposition \ref{prop BSDE uv} and Theorem \ref{th nash} which is the main result of this work.  Notice that the function $f$ is no longer bounded as in the literature but is of linear growth in $x$. 
\end{remark}

As a by-product of Lemma  \ref{lem3.1} and \ref{density function lp bounded},  we also have the similar estimates for the process $X^{t,x}$.
\begin{lemma}
(i) There exist two constants $\bar{C}_q,\ \bar{C}_{\lambda,l}\in \R$, such that \bp-a.s.

\begin{equation}\label{estimate of X sigma}
\forall \ q \in [1, \infty ), \  \textbf{E}\Big[(\sup\limits_{s\leq
T}\abs{X_s^{t,x}})^{2q}\Big]\leq \bar{C}_q(1+\abs{x}^{2q}),
\end{equation}
\no and 
\be\label{estimate of X sigma exp}
\forall l\in [1,2),\ \lambda\in (0, \infty),\  \textbf{E}\Big[e^{\lambda \sup\limits_{s\leq T}|X_s^{t,x}|^{l}}\Big]\leq e^{\bar{C}_{\lambda,l}(1+|x|^{l})}; 
\ee
(ii) Moreover, for solutions of the weak formulation of SDEs (\ref{SDEf}), we have the similar results. Precisely speaking, for $(u,v)\in \mathcal{M}$, $\textbf{E}_{t,x}^{u,v}$ is the expectation under the probability $\bp_{t,x}^{u,v}$, then there exist constants $\tilde{C}_q$,$\tilde{C}_{\lambda,l}\in \R$, such that \bp-a.s.

\begin{equation}\label{estimate euv of X sigma}
\forall \ q \in [1, \infty ), \  \textbf{E}_{t,x}^{u,v}\Big[\Big(\sup\limits_{s\leq
T}\abs{X_s^{t,x}}\Big)^{2q}\Big]\leq \tilde{C}_q(1+\abs{x}^{2q}),
\end{equation}
\no and 
\be\label{estimate euv of X sigma exp}
\forall l\in [1,2),\ \lambda\in (0,\infty),\  \textbf{E}_{t,x}^{u,v}\Big[e^{\lambda \sup\limits_{s\leq T}|X_s^{t,x}|^{l}}\Big]\leq e^{\tilde{C}_{\lambda,l}(1+|x|^{l})}. 
\ee 
\end{lemma}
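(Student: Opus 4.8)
The plan is to handle the two parts separately: part (i) reduces immediately to the estimates already proved for $\theta^{t,x}$, and part (ii) is deduced from part (i) by a change-of-measure argument resting on Corollary \ref{coro lp}.

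For part (i), I would observe that the process $X^{t,x}$ defined by (\ref{SDE sigma}) is exactly a solution of (\ref{SDE b}) with the drift coefficient $b\equiv 0$. The null function trivially satisfies Assumption (A4) (it is bounded with $C_b=0$ and Lipschitz with $C_2=0$), so Lemma \ref{lem3.1} applies verbatim to $X^{t,x}$. The polynomial bound (\ref{estimate of X sigma}) and the exponential bound (\ref{estimate of X sigma exp}) are then precisely (\ref{estimate of theta}) and (\ref{estimate of theta exp}) specialized to $b\equiv 0$, with $\bar C_q$ and $\bar C_{\lambda,l}$ inherited from the constants of Lemma \ref{lem3.1} (now depending only on $C_\sigma$, $T$ and the relevant parameters). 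No further work is needed here.

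For part (ii), the essential difficulty is that the drift $f(s,X_s,u_s,v_s)$ appearing in the weak formulation (\ref{SDEf}) under $\bp^{u,v}_{t,x}$ is only of linear growth, so Lemma \ref{lem3.1} cannot be invoked directly under the new measure. Instead I would pass back from $\bp^{u,v}_{t,x}$ to $\bp$ using the density (\ref{new prob}) together with H\"older's inequality. Writing $\zeta_T$ for the nonnegative Radon--Nikodym density $d\bp^{u,v}_{t,x}/d\bp$ and letting $p_0\in(1,2)$, $p_0'=p_0/(p_0-1)$ be conjugate exponents, for any nonnegative functional $\Phi$ one has
\[
\euv_{t,x}[\Phi]=\textbf{E}[\Phi\,\zeta_T]\leq\big(\textbf{E}[\Phi^{p_0'}]\big)^{1/p_0'}\big(\textbf{E}[\zeta_T^{p_0}]\big)^{1/p_0}.
\]
By Corollary \ref{coro lp} the last factor is bounded by a constant $C^{1/p_0}$. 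Taking $\Phi=(\sup_{s\leq T}|\xs|)^{2q}$ turns the first factor into $\textbf{E}[(\sup_{s\leq T}|\xs|)^{2qp_0'}]^{1/p_0'}$, which part (i) controls by $\bar C_{qp_0'}^{1/p_0'}(1+|x|^{2qp_0'})^{1/p_0'}$; the subadditivity of $t\mapsto t^{1/p_0'}$ then gives $(1+|x|^{2qp_0'})^{1/p_0'}\leq 1+|x|^{2q}$, yielding (\ref{estimate euv of X sigma}) after renaming constants. Taking instead $\Phi=e^{\lambda\sup_{s\leq T}|\xs|^{l}}$ gives the first factor as $\textbf{E}[e^{\lambda p_0'\sup_{s\leq T}|\xs|^{l}}]^{1/p_0'}$, and since $\lambda p_0'\in(0,\infty)$ and $l\in[1,2)$ remain in the admissible range, (\ref{estimate of X sigma exp}) bounds it by $e^{\bar C_{\lambda p_0',l}(1+|x|^{l})/p_0'}$; absorbing the harmless constant $C^{1/p_0}$ into the exponent (using $1+|x|^{l}\geq 1$) produces (\ref{estimate euv of X sigma exp}).

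I expect the only genuinely delicate point to be the uniformity of the constants over all admissible $(u,v)$. This hinges on checking, via Lemma \ref{density function lp bounded}, that the exponent $p_0$ and the bound $C$ in Corollary \ref{coro lp} depend only on structural data: here $\varphi=\sigma^{-1}f$ has linear-growth constant $C_\sigma C_f$, independent of the controls, and the underlying diffusion (\ref{SDE sigma}) has zero drift, so that $\tilde C_q$ and $\tilde C_{\lambda,l}$ can indeed be chosen uniformly in $(u,v)\in\mathcal{M}$. Beyond this, the argument is a routine H\"older transfer; the entire conceptual substance is carried by the $L^{p_0}$-integrability of the exponential martingale under the mere linear growth of $f$, which is exactly the technical novelty enabling the boundedness of the drift to be dropped.
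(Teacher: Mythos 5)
Your proposal is correct and follows essentially the same route as the paper: part (i) is Lemma \ref{lem3.1} applied with $b\equiv 0$ (which trivially satisfies (A4)), and part (ii) rests, exactly as in the paper's proof, on the change of measure $\textbf{E}^{u,v}_{t,x}[\Phi]=\textbf{E}[\Phi\,\zeta_T]$, the $L^{p_0}$-bound of Corollary \ref{coro lp}, and part (i) applied with the amplified exponent $\frac{p_0\lambda}{p_0-1}=\lambda p_0'$. The only cosmetic difference is that you split the product multiplicatively via H\"older's inequality where the paper splits it additively via Young's inequality; your additional remarks (the polynomial-moment case, which the paper omits, and the uniformity of $p_0$ and $C$ in $(u,v)$) are correct and merely make explicit what the paper leaves implicit.
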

\begin{proof}
We only prove \eqref{estimate euv of X sigma exp}. Since,
\[
\textbf{E}_{t,x}^{u,v}\Big[e^{\lambda \sup\limits_{s\leq T}|X_s^{t,x}|^{l}}\Big]= \textbf{E}\Big[e^{\lambda \sup\limits_{s\leq T}|X_s^{t,x}|^{l}}\cdot \zeta_T\Big],
\] 
where $\zeta_T$ represents $\zeta_T (\int_0^{\cdot}\sigma(s, X_s^{t,x})^{-1}f(s, X_s^{t,x},u_s,v_s)dB_s)$. As a result of Corollary \ref{coro lp}, there exists some $p_0\in (1,2)$, such that, $\zeta_T\in L^{p_0}$. Therefore, by Young's inequality and \eqref{estimate of X sigma exp}, we obtain that,
\begin{align*}
\textbf{E}_{t,x}^{u,v}\left[e^{\lambda \sup\limits_{s\leq T}|X_s^{t,x}|^{l}}\right]
                      &\leq \textbf{E} \left[e^{\frac{p_0 \lambda}{p_0-1} \sup\limits_{s\leq T}|X_s^{t,x}|^{l}}\right]+ \textbf{E}\left[|\zeta_T|^{p_0}\right]\\
                      &\leq e^{\bar{C}_{\lambda,l, p_0}(1+|x|^{l})}+C_{m,p_0}\\
                      &\leq e^{\tilde{C}_{\lambda,l, m,p_0}(1+|x|^{l})}.                      
\end{align*} 
\end{proof}

The next proposition characterizes the payoff function $J^i(u,v)$ for $i=1,2$ with form (\ref{payoff}) by means of BSDEs. It turns out that the payoffs $J^i(u,v)$ can be expressed as the exponential of the initial value for a related BSDE. It is multidimensional, with a continuous generator involving a quadratic term of $Z$.

\begin{proposition} \label{prop BSDE uv}

Under Assumptions (A1) and (A2), for any admissible control $(u, v)\in \cal{M}$, there exists a pair of 
 adapted processes $(Y^{i,(u,v)}, Z^{i,(u,v)})$, $i=1,2$, with values on $\R\times \R^m$ such that:
\begin{description}
  \item[(i)] For any $p>1$, 
  \begin{equation}\label{estimate of Yuv under Puv}
  \euv\big[\sup_{0\leq t\leq T}e^{pY_t^{i,(u,v)}}\big]<\infty \ \text{and}\  \bp-a.s. \int_0^T|Z_t^{i,(u,v)}|^2dt<\infty.
  \end{equation}

  \item[(ii)] For $t\leq T$, 
  \begin{align}\label{BSDE uv}
   Y_t^{i,(u,v)}= g^i(X_T^{0,x_0})&+\int_t^T \big\{H_i(s, X_s^{0,x_0}, Z_s^{i,(u,v)}, u_s, v_s)+\frac{1}{2}|Z_s^{i,(u,v)}|^2\big\}ds\nonumber\\
   &-\int_t^T Z_s^{i,(u,v)}dB_s.
   \end{align}
\end{description}

\noindent The solution is unique for fixed $x_0\in \R^m$. Moreover, $J^i(u,v)=e^{Y_0^{i,(u,v)}}$.
\end{proposition}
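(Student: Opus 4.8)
The plan is to exploit the exponential (Cole--Hopf) transform that linearizes the quadratic term and to realize the payoff (\ref{payoff}) as the terminal value of a positive martingale under the probability $\bp^{u,v}$. Throughout, fix $(u,v)\in\mathcal M$ and $i\in\{1,2\}$, and recall that under $\bp^{u,v}$ the process $B^{u,v}$ defined before (\ref{SDEf}) is a Brownian motion. Write $\xi^i=\int_0^T h_i(s,X_s^{0,x_0},u_s,v_s)\,ds+g^i(X_T^{0,x_0})$ and $A_t=\int_0^t h_i(s,X_s^{0,x_0},u_s,v_s)\,ds$, so that $J^i(u,v)=\euv[e^{\xi^i}]$. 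I would first construct the solution as a BSDE driven by $B^{u,v}$ and only at the end rewrite it in the form (\ref{BSDE uv}) driven by $B$, using the Girsanov relation $dB_s^{u,v}=dB_s-\sigma^{-1}(s,X_s^{0,x_0})f(s,X_s^{0,x_0},u_s,v_s)\,ds$; since $Z\sigma^{-1}f+h_i=H_i$ by (\ref{definition of hamiltonian function}), this substitution converts the driver $\tfrac12|Z|^2+h_i$ into $H_i+\tfrac12|Z|^2$.

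The construction proceeds as follows. First, the growth assumptions (A2)(ii)--(iii) give $|\xi^i|\le C(1+\sup_{s\le T}|X_s^{0,x_0}|^\gamma)$ with $1<\gamma<2$, so the exponential moment estimate (\ref{estimate euv of X sigma exp}) (applied with $l=\gamma$) yields $\euv[e^{p\xi^i}]<\infty$ for every $p\ge1$. Hence $M_t:=\euv[e^{\xi^i}\,|\,\mathcal F_t]$ is a well-defined, strictly positive, uniformly integrable $\bp^{u,v}$-martingale with $M_T=e^{\xi^i}$ and $M_0=J^i(u,v)$. Using the predictable representation property of $B^{u,v}$ under $\bp^{u,v}$, write $M_t=M_0+\int_0^t\psi_s\,dB_s^{u,v}$ for some predictable $\psi$, and set
\[
Y_t^{i,(u,v)}:=\ln M_t-A_t,\qquad Z_t^{i,(u,v)}:=\psi_t/M_t .
\]
Applying It\^o's formula to $\ln M_t$ and using $d\langle M\rangle_t=|\psi_t|^2\,dt$ gives, under $\bp^{u,v}$,
\[
dY_t^{i,(u,v)}=Z_t^{i,(u,v)}\,dB_t^{u,v}-\tfrac12|Z_t^{i,(u,v)}|^2\,dt-h_i(t,X_t^{0,x_0},u_t,v_t)\,dt,
\]
which after the Girsanov substitution above is exactly (\ref{BSDE uv}); moreover $e^{Y_0^{i,(u,v)}}=M_0=J^i(u,v)$, as required.

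It then remains to check the integrability (\ref{estimate of Yuv under Puv}) and uniqueness. For (i), note $e^{pY_t^{i,(u,v)}}=M_t^p e^{-pA_t}\le M_t^p e^{pC(1+\sup_{s}|X_s^{0,x_0}|^\gamma)}$; splitting by H\"older and bounding $\euv[\sup_t M_t^{pa}]$ by Doob's inequality and $\euv[M_T^{pa}]=\euv[e^{pa\xi^i}]$, both factors are finite by (\ref{estimate euv of X sigma exp}), giving $\euv[\sup_t e^{pY_t^{i,(u,v)}}]<\infty$; and $\int_0^T|Z_t^{i,(u,v)}|^2dt<\infty$ $\bp$-a.s.\ because $M$ is continuous and bounded away from $0$ on $[0,T]$ while $\int_0^T|\psi_t|^2dt<\infty$ a.s.\ by the representation. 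For uniqueness, if $(Y^k,Z^k)$, $k=1,2$, both solve (\ref{BSDE uv}) with (\ref{estimate of Yuv under Puv}), then reverting to $B^{u,v}$ and setting $P_t^k=e^{Y_t^k+A_t}$ makes each $P^k$ a positive $\bp^{u,v}$-local martingale with $P_T^k=e^{\xi^i}$; the bound (\ref{estimate of Yuv under Puv}) shows $\sup_t P_t^k$ is $\bp^{u,v}$-integrable, so each $P^k$ is of class (D), hence a true martingale equal to $M$. Thus $Y^1=Y^2$, and comparing the stochastic integrals forces $Z^1=Z^2$, $dt\otimes d\bp$-a.e.

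The main obstacle is the exponential integrability under the \emph{changed} measure $\bp^{u,v}$. This is precisely where the unboundedness of $f$ bites: it is (\ref{estimate euv of X sigma exp}), itself a consequence of Corollary \ref{coro lp} (the $L^{p_0}$-bound on the Dol\'ean-Dade density (\ref{density fun})) together with Lemma \ref{lem3.1}, that allows $\euv[e^{p\xi^i}]<\infty$, and this requires the sub-quadratic exponent $\gamma<2$ since (\ref{estimate euv of X sigma exp}) fails for $l=2$. A secondary technical point is the predictable representation of $\bp^{u,v}$-martingales with respect to $B^{u,v}$ (valid because $\bp^{u,v}\sim\bp$ through the exponential density (\ref{density fun})) and the upgrade of the candidate local martingale to a genuine (class (D)) martingale, for which the strong integrability (\ref{estimate of Yuv under Puv}) is exactly what is needed.
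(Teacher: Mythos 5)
Your proposal is correct, and its backbone coincides with the paper's: identify $e^{Y^{i,(u,v)}_t+\int_0^t h_i\,ds}$ with the positive $\bp^{u,v}$-martingale $\euv\big[e^{\xi^i}\,|\,\mathcal F_t\big]$, extract $Z^{i,(u,v)}$ from a martingale representation, apply It\^o and Girsanov to recover \eqref{BSDE uv}, and read off $J^i(u,v)=e^{Y^{i,(u,v)}_0}$. The genuine difference is where the representation theorem is applied. You invoke the predictable representation property of $B^{u,v}$ under $\bp^{u,v}$ directly, justified by the (standard but nontrivial) stability of the PRP under equivalent changes of measure; note the filtration here is generated by $B$, not by $B^{u,v}$, so this transfer is really needed. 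The paper avoids citing that transfer result: it writes $\bar Y^{1,(u,v)}_t=\zeta_t^{-1}\,\textbf{E}\big[\zeta_T e^{\xi^1}\,|\,\mathcal F_t\big]$, applies the ordinary Brownian MRT under $\bp$ --- which forces it to check $\zeta_T e^{\xi^1}\in\mathcal L^q(d\bp)$ via Corollary \ref{coro lp} and Young's inequality --- and then computes $\bar Z^{1,(u,v)}$ explicitly by It\^o's formula on the product of $\zeta_t^{-1}$, $e^{-\int_0^t h_1\,ds}$ and the representing $\bp$-martingale. In effect, the paper's computation is a hands-on proof of exactly the PRP-transfer you cite, so your route is shorter but leans on a quoted theorem, while the paper's route makes the $L^{p_0}$-bound on the Girsanov density (Corollary \ref{coro lp}) enter the existence step itself, not only the moment estimate \eqref{estimate euv of X sigma exp}. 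A second difference is uniqueness: your class-(D) argument (any solution satisfying \eqref{estimate of Yuv under Puv} turns $e^{Y_t+\int_0^t h_i\,ds}$ into a positive local martingale dominated by an integrable variable, hence equal to the conditional-expectation martingale) is more explicit than the paper's one-line ``uniqueness follows from the construction'', and it correctly makes visible that uniqueness holds within the integrability class (i). Both arguments ultimately rest on the same two inputs: the exponential moment bound \eqref{estimate euv of X sigma exp} under the changed measure (available only because $\gamma<2$) and the equivalence $\bp^{u,v}\sim\bp$.
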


\begin{proof}
\textit{Part $\uppercase\expandafter{\romannumeral 1}$ : Existence and uniqueness.} 
We take the case of $i=1$ for example, and of course the case of $i=2$ can be solved in a similar way. The main method here is to define a reasonable form of the solution directly. We first eliminate the quadratic term in the generator by applying the classical exponential exchange. Then, the definition of $Y$ component is closely related to Girsanov's transformation, and the process $Z$ is given by  the martingale representation theorem. Afterwards, we shall verify by \ito 's formula that what we defined above is exactly the solution of the original BSDE. 
\medskip

As we stated in the previous section, the process $(X_s^{0,x_0})_{s\leq T}$ satisfies SDE \eqref{SDEf} by substituting $(0,x_0)$ for $(t,x)$.
\medskip

In order to remove the quadratic part in the generator of BSDE (\ref{BSDE uv}), we first take the classical exponential exchange as follows: $\forall t\leq T$, let
\bes
\left\{
\bal
\ybuv&=e^{Y_t^{1,(u,v)}};\\
\zbtuv&=\ybuv Z_t^{1,(u,v)}.
\eal
\right.
\ees
\no Therefore, the processes $(\ybuv,\zbtuv)_{t\leq T}$ solve the following BSDE:
\begin{align}\label{bsde uv exp}
\ybuv =e^{g^1(X_T^{0,x_0})}&+\int_t^T \zbsuv \sigma^{-1}(s, X_s^{0,x_0})f(s, X_s^{0,x_0}, u_s, v_s)\nonumber\\
&+(\bar{Y}_s^{1,(u,v)})^{+}h(s, X_s^{0,x_0}, u_s, v_s)ds- \int_t^T \zbsuv dB_s,\ t\leq T.
\end{align}
\no Applying Girsanov's transformation as indicated by \eqref{new prob}-\eqref{density fun}, the BSDE \eqref{bsde uv exp} then reduces to 
\bes
\ybuv = e^{g^1(X_T^{0,x_0})}+\int_t^T(\bar{Y}_s^{1,(u,v)})^{+}h(s, X_s^{0,x_0}, u_s, v_s)ds- \int_t^T \zbsuv d\bsuv,\ t\leq T.
\ees
Let us now define the process $\bar{Y}^{1,(u,v)}$ explicitly by: 
\be \label{def ybuv}
\ybuv:=\euv\Big[\exp\Big\{g^1(X_T^{0,x_0})+\int_t^Th_1(s, X_s^{0,x_0}, u_s, v_s)ds\Big\}\Big|\mathcal{F}_t\Big],\ t\leq T.
\ee
\no Considering the sub-quadratic growth Assumptions (A2)-(ii)(iii) on $h_1$ and $g^1$ and the estimate (\ref{estimate euv of X sigma exp}), we obtain,
\begin{align*}
\euv\Big[\exp&\Big\{g^1(X_T^{0,x_0})+\int_0^Th_1(s, X_s^{0,x_0}, u_s, v_s)ds\Big\}\Big]\\
 &\leq  \euv\Big[\exp{\Big\{C\sup_{0\leq s\leq T}\left(1+\abs{X_s^{0,x_0}}^{\gamma}\right)\Big\}}\Big]< \infty,
\end{align*}
\no with constant $C=C_g\vee(TC_h)$. Therefore, we claim that the process $(\ybuv)_{t\leq T}$ in \eqref{def ybuv} is well-defined. 
\medskip

We will give now the definition of process $(\zbtuv)_{t\leq T}$. In the following, for notation convenience, we denote by $\zeta$ the following process $\zeta:=(\zeta_t)_{t\leq T}=(\zeta_t(\int_0^.\sigma^{-1}(s, X_s^{0,x_0})f(s, X_s^{0,x_0}, u_s, v_s)dB_s))_{t\leq T}$. Then the definition (\ref{def ybuv}) can be rewritten as:
\be\label{3.14}
 \ybuv= \zeta_t^{-1}\cdot \textbf{E}\Big[\zeta_T\cdot\exp\Big\{g^1(X_T^{0,x_0})+\int_t^Th_1(s, X_s^{0,x_0}, u_s, v_s)ds\Big\}\Big|\mathcal{F}_t\Big],\ t\leq T.
\ee 
Thanks to Corollary \ref{coro lp}, there exists some $p_0\in (1,2)$, such that {\small{$\textbf{E}[|\zeta_T|^{p_0}]<\infty$}}. Therefore, from Young's inequality, we get that for any constant $q\in(1,p_0)$,
\begin{equation*}
\begin{aligned}
&\quad \textbf{E} \Big[\Big|\zeta_T\cdot \exp{\Big\{g^1(X_T^{0,x_0})+\int_0^T
h_1(s,
X_s^{0,x_0},u_s,v_s)ds\Big\}}\Big|^q\Big]\\
&\leq \frac{q}{p_0}\textbf{E}\left[\abs{\zeta_T}^{p_0}\right]+
\frac{p_0-q}{p_0}\textbf{E}\Big[\exp{\Big\{\frac{qp_0}{p_0-q}\cdot\big(g^1(X_T^{0,x_0})+\int_0^Th_1(s,
X_s^{0,x_0},u_s,v_s)ds\big)\Big\}}\Big].
\end{aligned} 
\end{equation*}

\no As a consequence of Assumptions (A2)-(ii)(iii) and (\ref{estimate of theta exp}), the following expectation is finite, i.e.,
\begin{equation*}
\frac{p_0-q}{p_0}\textbf{E}\Big[\exp{\Big\{\frac{qp_0}{p_0-q}\cdot\big(g^1(X_T^{0,x_0})+\int_0^Th_1(s,
X_s^{0,x_0},u_s,v_s)ds\big)\Big\}}\Big]<\infty
\end{equation*}
\no Then, we deduce that, 
\begin{equation*}
\zeta_T\cdot \exp{\Big\{g^1(X_T^{0,x_0})+\int_0^T
h_1(s,
X_s^{0,x_0},u_s,v_s)ds\Big\}}\in \mathcal{L}^q(d\bp).
\end{equation*}

\no It follows from \eqref{3.14} and the representation theorem that, there exists a $\mathcal{P}$-measurable process $(\bar{\theta}_s)_{s\leq T}\in \mathcal{H}_T^q(\R^m)$, such that for any $t\leq T$,

\begin{equation*}
\begin{aligned}
\ybuv&= \zeta_t^{-1}\exp\{-\int_0^th_1(s, X_s^{0,x_0}, u_s, v_s)ds\}\times\\
&\quad\times\{\textbf{E}\Big[\zeta_T \exp\Big\{g^1(X_T^{0,x_0})+\int_0^Th_1(s, X_s^{0,x_0}, u_s, v_s)ds\Big\}\Big]+\int_0^t\bar{\theta}_sdB_s\}
\end{aligned}
\end{equation*}

\no Let us denote by:
\bes
R_t:= \textbf{E}\Big[\zeta_T \exp\Big\{g^1(X_T^{0,x_0})+\int_0^Th_1(s, X_s^{0,x_0}, u_s, v_s)ds\Big\}\Big]+\int_0^t\bar{\theta}_sdB_s,\quad t\leq T.
\ees

\no Taking account of $d\zeta_t = \zeta_t \sigma^{-1}(t, X_t^{0,x_0})f(t, X_t^{0,x_0}, u_t, v_t)dB_t$ for $t\leq T$, then by \ito's formula, we have 
$d\zeta_t^{-1} = -\zeta_t^{-1}\{\sigma^{-1}(t, X_t^{0,x_0})f(t, X_t^{0,x_0}, u_t, v_t)dB_t-|\sigma^{-1}(t, X_t^{0,x_0})f(t, X_t^{0,x_0}, u_t, v_t)|^2dt\}, \ t\leq T.$
\no Moreover,
\begin{align*}
d\Big[\zeta_t^{-1}&\exp\{-\int_0^th_1(s,X_s^{0,x_0},u_s,v_s)ds\}\Big]\\
&=-\zeta_t^{-1}\exp\{-\int_0^th_1(s,X_s^{0,x_0,u_s,v_s})ds\}\Big\{\sigma^{-1}(t, X_t^{0,x_0})f(t, X_t^{0,x_0}, u_t, v_t)dB_t\\
&\quad+\big[-|\sigma^{-1}(t, X_t^{0,x_0})f(t, X_t^{0,x_0}, u_t, v_t)|^2+h_1(t,X_t^{0,x_0},u_t,v_t)\big]dt\Big\}, \ t\leq T.
\end{align*}
\no Hence, for $t\leq T$, 
\begin{align*}
d\ybuv = &-\zeta_t^{-1}\exp\{-\int_0^th_1(s,X_s^{0,x_0},u_s,v_s)ds\}\Big\{\sigma^{-1}(t, X_t^{0,x_0})f(t, X_t^{0,x_0}, u_t, v_t)dB_t\\
&\qquad\qquad+\big[-|\sigma^{-1}(t, X_t^{0,x_0})f(t, X_t^{0,x_0}, u_t, v_t)|^2+h_1(t,X_t^{0,x_0},u_t,v_t)\big]dt\Big\}R_t\\
&+ \zeta_t^{-1}\exp\{-\int_0^th_1(s,X_s^{0,x_0},u_s,v_s)ds\}\bar{\theta}_tdB_t\\
&-\zeta_t^{-1}\exp\{-\int_0^th_1(s,X_s^{0,x_0},u_s,v_s)ds\}\sigma^{-1}(t, X_t^{0,x_0})f(t, X_t^{0,x_0}, u_t, v_t)\bar{\theta}_tdt,
\end{align*}

\no which allows us to define the process $\bar{Z}^{1,(u,v)}$ as the volatility coefficient of the above equation, \textit{i.e.}, for $t\leq T$,
\begin{align}\label{def zbtuv}
 \zbtuv:= -\zeta_t^{-1}\exp\{-\int_0^th_1(s,X_s^{0,x_0},u_s,v_s)ds\}\Big\{&\sigma^{-1}(t, X_t^{0,x_0})f(t, X_t^{0,x_0}, u_t, v_t)R_t\nonumber\\
 &-\bar{\theta}_t\Big\}.
\end{align} 

\no Then, it is not difficult to verify that the process $(\ybuv, \zbtuv)_{t\leq T}$, as we defined by (\ref{def ybuv}) (\ref{def zbtuv}) satisfies the BSDE (\ref{bsde uv exp}). Moreover, it can be seen obviously from (\ref{def ybuv}) that $\ybuv>0$ for all $t\in [0,T]$. Therefore, we define the pair of processes $(Y^{1,(u,v)},Z^{1,(u,v)})$ as follows: 
\bes
\left\{
\bal
\ytuv &= \ln \ybuv;\\
\ztuv &= \frac{\zbtuv}{\ybuv},\quad t\leq T.
\eal
\right.
\ees  
\no which completes the proof of existence. 
\medskip

The uniqueness is natural by the above construction itself for fixed $x_0\in \R^m$. Since, the solution of BSDE \eqref{bsde uv exp}, if exists, will be of the form \eqref{def ybuv} and \eqref{def zbtuv}.
\medskip

\textit{Part $\uppercase\expandafter{\romannumeral 2}$ : Norm estimates.}\quad  Finally, let us focus on the estimate of $(\ytuv)_{t\leq T}$ which is needed in the next theorem. First, as a consequence of the definition (\ref{def ybuv}) that for any $p>1$,
\begin{equation}\label{3.17}
\begin{aligned}
\euv&\big[|\sup_{t\in[0,T]}\ybuv|^p\big]\\
&\leq \euv \Big[\Big|\sup_{t\in{[0,T]}}\Big\{\euv\big[\exp \{g^1(X_T^{0,x_0})+\int_0^T|h_1(s, X_s^{0,x_0}, u_s, v_s)|ds\}|\mathcal{F}_t\big]\Big\}\Big|^p\Big].
\end{aligned}
\end{equation}
\no Noticing that the process {\small{$\euv\big[\exp \{g^1(X_T^{0,x_0})+\int_0^T|h_1(s, X_s^{0,x_0}, u_s, v_s)|ds)\}|\mathcal{F}_.\big]$}}  is a $\mathcal{F}_t$-martingale, then Doob's maximal inequality(see \cite{KS} pp.14) implies that,
\begin{equation}
\begin{aligned}
\euv&\big[|\sup_{t\in[0,T]}\ybuv|^p\big]\\
&\leq (\frac{p}{p-1})^p\euv\Big[\Big|\euv\big[\exp \{g^1(X_T^{0,x_0})+\int_0^T|h_1(s, X_s^{0,x_0}, u_s, v_s)|ds\}|\mathcal{F}_T\big]\Big|^{p}\Big]
\end{aligned}
\end{equation}
\no Then, considering the Jensen's inequality and Assumption (A2)(ii)-(iii)  on $g^1$ and $h_1$, it turns out that,
\begin{equation}
\begin{aligned}
\euv&\big[|\sup_{t\in[0,T]}\ybuv|^p\big]\\
&\leq (\frac{p}{p-1})^p\euv\big[\exp \{pg^1(X_T^{0,x_0})+p\int_0^T|h_1(s, X_s^{0,x_0}, u_s, v_s)|ds\}\big]\\
&\leq (\frac{p}{p-1})^p\euv\big[e^{\sup\limits_{t\in[0,T]}C(1+|X_t^{0,x_0}|^{\gamma})}\big]< \infty,
\end{aligned}
\end{equation}
which is given by the estimate (\ref{estimate euv of X sigma exp}) with constant $C$ depending on $p,\ C_g,\ C_h,$ and $T$. Therefore, 
\begin{equation}\label{3.20}
\euv\big[\sup_{t\in[0,T]}|\ybuv|^p\big]<\infty,
\end{equation}
\no which gives, 
\bes
\euv\big[\sup_{t\in [0,T]}e^{p\ytuv}\big]<\infty, \qquad \forall p>1.
\ees

At last, note that in taking $t=0$ in (\ref{def ybuv}) we obtain {\small{$J^1(u,v)=\bar{Y}_0^{1,(u,v)}=e^{Y_0^{1,(u,v)}}$}} since $\mathcal{F}_0$  contains only $\bp$ and $\bp^{u,v}$ null sets.
\end{proof}
\medskip

We are now ready to demonstrate the existence of Nash equilibrium point which is the main result of this article. 
\medskip

\begin{theorem}\label{th nash} Let us assume that:

\no (i) Assumptions (A1), (A2) and (A3) are fulfilled ;

\no (ii) There exist two pairs of $\mathcal{P}$-measurable processes $(Y^i,Z^i)$ with values in $\R^{1+m}$, $i=1,2$, and two deterministic functions $\varpi^i(t,x)$ which are of subquadratic growth, i.e. $|\varpi^i(t,x)|\leq C(1+|x|^{\gamma})$ with $1<\gamma<2$, $i=1,2$ such that:
\begin{equation}\label{main BSDE}
\left\{
\begin{aligned}
\bp&\text{-a.s.}, \forall t\leq T, Y_t^i=\varpi^i(t,X_t^{0,x}) \text{ and } Z^i \text{is dt-square integrable } \bp\text{-a.s.};\\
Y^i_t &=g^i(\xTo)+\int_t^T \{H_i(s, \xso, Z_s^i, (\us, \vs)(s, \xso, Z_s^1, Z_s^2))+\frac{1}{2}\abs{Z_s^i}^2\}ds\\
&\qquad \qquad \quad- \int_t^T Z_s^i dB_s,\ \forall t\leq T.
\end{aligned}
\right.
\end{equation}

\no Then the pair of control $(u^*(s, X_s^{0,x}, Z_s^1, Z_s^2),v^*(s,X_s^{0,x}, Z_s^1, Z_s^2))_{s\leq T}$ is admissible and a Nash equilibrium point for the game. 
\end{theorem}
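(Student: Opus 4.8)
The plan is to reduce the Nash property to two comparison estimates for the BSDE \eqref{BSDE uv} of Proposition \ref{prop BSDE uv}, one per player, and to exploit the representation $J^i(u,v)=e^{Y_0^{i,(u,v)}}$ proved there. Throughout I write $u^*_s:=\us(s,\xso,Z^1_s,Z^2_s)$ and $v^*_s:=\vs(s,\xso,Z^1_s,Z^2_s)$ for $s\le T$.

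First I would dispose of admissibility: $u^*$ and $v^*$ take values in the compact sets $U_1,U_2$ and, being compositions of the Borel maps $\us,\vs$ of (A3)(i) with the $\mathcal P$-measurable processes $\xso,Z^1,Z^2$, are $\mathcal P$-measurable, so $(u^*,v^*)\in\m$. Moreover, with this choice the system \eqref{main BSDE} is precisely the BSDE \eqref{BSDE uv} driven by the pair $(u^*,v^*)$; hence the uniqueness in Proposition \ref{prop BSDE uv} identifies $(Y^i,Z^i)$ with $(Y^{i,(u^*,v^*)},Z^{i,(u^*,v^*)})$ and gives $J^i(u^*,v^*)=e^{Y^i_0}$, $i=1,2$.

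Next I would establish the first Nash inequality; the second follows symmetrically, using the player-2 part of (A3)(i) with momentum variable $Z^2$ in place of $Z^1$. Fix $u\in\m_1$ and let $(Y^{1,(u,v^*)},Z^{1,(u,v^*)})$ solve \eqref{BSDE uv} for the control $(u,v^*)$, so $J^1(u,v^*)=e^{Y_0^{1,(u,v^*)}}$; since $x\mapsto e^x$ is increasing it suffices to prove $Y^1_0\le Y_0^{1,(u,v^*)}$. To neutralise the quadratic term I pass to the exponential variables $\bar Y=e^Y$, $\bar Z=\bar Y Z$, which (as in the proof of Proposition \ref{prop BSDE uv}) turn both equations into BSDEs that are \emph{linear} in $\bar Z$, with common terminal value $e^{g^1(\xTo)}$ and driver $\bar Z_s\sigma^{-1}(s,\xso)f(s,\xso,\cdot)+\bar Y_s h_1(s,\xso,\cdot)$ taken at $(u^*_s,v^*_s)$, respectively $(u_s,v^*_s)$. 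Multiplying the player-1 inequality of (A3)(i), evaluated at $(s,\xso,Z^1_s,Z^2_s)$, by $\bar Y^1_s>0$ yields the pointwise ordering
\[
\bar Z^1_s\sigma^{-1}f(u^*_s,v^*_s)+\bar Y^1_s h_1(u^*_s,v^*_s)\le \bar Z^1_s\sigma^{-1}f(u_s,v^*_s)+\bar Y^1_s h_1(u_s,v^*_s),
\]
where the arguments $(s,\xso)$ are suppressed. Thus, at its own solution, the driver of the $\bar Y^1$-equation is dominated by that of the $(u,v^*)$-equation. Writing $\hat Y=\bar Y^{1,(u,v^*)}-\bar Y^1$ and $\hat Z=\bar Z^{1,(u,v^*)}-\bar Z^1$, a standard linearisation of the difference of the two drivers then shows that $\hat Y$ solves
\[
\hat Y_t=\int_t^T\big(\alpha_s\hat Z_s+h_1(s,\xso,u_s,v^*_s)\hat Y_s+\rho_s\big)ds-\int_t^T\hat Z_s\,dB_s,\quad \hat Y_T=0,
\]
with $\alpha_s=\sigma^{-1}(s,\xso)f(s,\xso,u_s,v^*_s)$ and a source $\rho_s\ge0$ given by the Isaacs gap above.

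Finally I would read off the sign of $\hat Y_0$ from the Feynman–Kac representation of this linear equation: removing the drift $\alpha_s$ by the Girsanov change of measure defining $\bp^{u,v^*}$ (cf. \eqref{new prob}) and discounting by $h_1$,
\[
\hat Y_0=\textbf{E}^{u,\vs}\Big[\int_0^T e^{\int_0^s h_1(r,\xro,u_r,v^*_r)dr}\,\rho_s\,ds\Big]\ge0,
\]
so $\bar Y_0^{1,(u,v^*)}\ge\bar Y^1_0$, i.e. $Y^1_0\le Y_0^{1,(u,v^*)}$ and $J^1(u^*,v^*)\le J^1(u,v^*)$. I expect the real difficulty, absent in the bounded-drift setting of \cite{H2003}, to lie entirely in justifying this representation when $f$ is merely of linear growth and $Z^1,Z^{1,(u,v^*)}$ only square-integrable: one must check that the Girsanov density is a genuine martingale enjoying the $L^{p_0}$-bound of Corollary \ref{coro lp}, that $\int_0^\cdot\hat Z_s\,dB^{u,v^*}_s$ is a true (not merely local) $\bp^{u,v^*}$-martingale, and that both the discount factor $e^{\int_0^T h_1\,dr}$ and the $\rho$-integral are integrable under $\bp^{u,v^*}$, the latter resting on the sub-quadratic growth of $h_1,g^1$ together with the exponential moment bound \eqref{estimate euv of X sigma exp}. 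This integrability bookkeeping under the change of measure is the step I would treat most carefully.
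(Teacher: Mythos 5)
Your proposal is correct in substance and its overall architecture coincides with the paper's: identify $(Y^i,Z^i)$ with the solution of \eqref{BSDE uv} for the control $(u^*,v^*)$ via the uniqueness in Proposition \ref{prop BSDE uv}; pass to exponential variables to remove the quadratic term; multiply the Isaacs inequality of (A3)(i) by $e^{Y^1_s}>0$ to order the drivers; then use Girsanov's change to $\bp^{u,v^*}$ together with the discount factor $e^{\int_0^t h_1\,dr}$ and the moment estimates (Corollary \ref{coro lp}, \eqref{estimate euv of X sigma exp}) to read off the sign at $t=0$. The one genuine methodological difference is the comparison device: you linearise the difference $\hat Y=\bar Y^{1,(u,v^*)}-\bar Y^1$ and represent $\hat Y_0$ as the $\bp^{u,v^*}$-expectation of the discounted Isaacs gap, whereas the paper applies the It\^o--Meyer (Tanaka) formula to the positive part $(e^{Y^1_t}-e^{Y^{1,(u,v^*)}_t})^+$ and shows that its discounted version $\Gamma^1_t$ has vanishing $\bp^{u,v^*}$-expectation. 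Both are legitimate routes; yours is arguably cleaner here because the transformed driver is genuinely affine in $(\bar Y,\bar Z)$, so no measurable selection of difference quotients or local-time bookkeeping is required.

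Two caveats on the step you defer, which is in fact the technical heart of the paper's proof. First, your requirement that $\int_0^\cdot\hat Z_s\,dB^{u,v^*}_s$ be a \emph{true} $\bp^{u,v^*}$-martingale is neither directly verifiable (the processes $Z^1$, $Z^{1,(u,v^*)}$ are only $dt$-square integrable $\bp$-a.s., not in $\mathcal{H}^2_T$) nor needed. The paper instead localises with the stopping times
\begin{equation*}
\tau_n=\inf\Big\{t\geq 0:\ |D_t^{u,v^*}|+|D_t^{u^*,v^*}|+\int_0^t\big(|Z_s^1|^2+|Z_s^{1,(u,v^*)}|^2\big)ds\geq n\Big\}\wedge T,
\end{equation*}
verifies the martingale property only up to $\tau_n$ (estimate \eqref{3.24}), and concludes by uniform integrability of the stopped discounted differences plus Fatou's lemma; your scheme must be recast in this localised form to close. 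Second, the uniform integrability estimate \eqref{gamma tau n} is exactly where hypothesis (ii) of the theorem enters: one needs $\textbf{E}^{u,v^*}\big[\sup_{t\leq T}e^{pY^1_t}\big]<\infty$, and this follows only because $Y^1_t=\varpi^1(t,X^{0,x}_t)$ with $\varpi^1$ deterministic and of subquadratic growth, combined with \eqref{estimate euv of X sigma exp}. Your integrability list cites the growth of $h_1,g^1$ and Corollary \ref{coro lp} but omits this use of $\varpi^1$; without it the uniform integrability, and hence the passage to the limit, does not go through.
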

\begin{proof}
For $s\leq T$, let us set $u_s^*= u^*(s,\xso,Z_s^1,Z_s^2)$ and $v_s^*=v^*(s,\xso,Z_s^1,Z_s^2)$. Then $(u^*, v^*)\in \mathcal{M}$. On the other hand, we obviously have $J^1(u^*, v^*)=e^{Y_0^1}$ by Proposition \ref{prop BSDE uv}. Next for an arbitrary element $u\in \mathcal{M}_1$, let us show that $e^{Y^1}\leq e^{Y^{u,v^*}}$, which yields $e^{Y_0^1}=J^1(u^*,v^*)\leq J^1(u, v^*)=e^{Y_0^{1,(u,v^*)}}$. We focus on this point below. For the admissible control $(u, v^*)$, thanks to Proposition \ref{prop BSDE uv}, there exists a pair of $\mathcal{P}$-measurable processes $(Y_t^{i,(u,\vs)}, Z_t^{i,(u,\vs)})_{t\leq T}$ for $i=1,2$, which satisfies: for any $p>1$,
\be\label{BSDE uvs} 
\left\{
\begin{aligned}
Y^{i,(u,\vs)} &\in \mathcal{D}_T^p(\R, d\bp^{u, v^*}), \  Z^{i,(u,\vs)} \text{ is }dt\text{-square integrable }\bp\text{-a.s.}\\
Y_t^{i,(u,\vs)}&= g^i(\xTo)+\int_t^T \{H_i(s, \xso, Z_s^{i,(u,\vs)}, u_s, v_s^*)+\frac{1}{2}|Z_s^{i,(u,\vs)}|^2 \}dt\\
&\qquad\qquad\quad - \int_t^T Z_s^{i,(u,\vs)}dB_s, \ \forall t\leq T.
\end{aligned}
\right.
\ee

Let us set: $\forall t\leq T$,
\bes
\dtusvs := e^{Y_t^1},\qquad \dtuvs := e^{\ytuvs}.
\ees
\no Thus \ito -Meyer formula yields, for any $t\leq T$,
\begin{align}\label{3.22}
-&d(\dtusvs -\dtuvs)^+ +dL_t^0(D^{u^*, v^*}-D^{u, v^*}) \nonumber\\
&=\left[\dtusvs H_1(t, \xto, Z_t^1, u_t^*, v_t^*)- \dtuvs H_1(t, \xto, \ztuvs, u_t, v_t^*)\right]1_{\{\dtusvs-\dtuvs>0\}}dt\nonumber\\
&\quad - (\dtusvs Z_t^1- \dtuvs \ztuvs) 1_{\{\dtusvs-\dtuvs>0\}}dB_t\nonumber\\
&= \Big[\dtusvs H_1(t, \xto, Z_t^1, u_t^*, v_t^*)- \dtusvs H_1(t, \xto, Z_t^1, u_t, v_t^*)\nonumber\\
&\quad +\dtusvs H_1(t, \xto, Z_t^1, u_t, v_t^*)-\dtuvs H_1(t, \xto, \ztuvs, u_t, v_t^*)\Big]1_{\{\dtusvs-\dtuvs>0\}}dt\nonumber\\
&\quad -(\dtusvs Z_t^1-\dtuvs\ztuvs)1_{\{\dtusvs-\dtuvs>0\}}dB_t\nonumber\\
&=\Big[\dtusvs\left( H_1(t, \xto, Z_t^1, u_t^*, v_t^*)-H_1(t, \xto, Z_t^1, u_t, v_t^*)\right)\nonumber\\
&\quad + (\dtusvs-\dtuvs)^+ h_1(t, \xto, u_t, v_t^*)\nonumber\\
&\quad +(\dtusvs Z_t^1-\dtuvs\ztuvs)\sigma^{-1}(t, \xto)f(t,\xto, u_t, v_t^*)\Big]1_{\{\dtusvs-\dtuvs>0\}}dt\nonumber\\
&\quad -(\dtusvs Z_t^1-\dtuvs\ztuvs)1_{\{\dtusvs-\dtuvs>0\}}dB_t,
\end{align}
where $L_t^0=L_t^0(D^{u^*, v^*}-D^{u, v^*})$ is the local time of the continuous semimartingale $D^{u^*, v^*}-D^{u, v^*}$ at time $0$. Next for $t\leq T$, let us  give $B_t^{u,\vs}=(B_t-\int_0^t\sigma^{-1}(s, \xso)f(s,\xso, u_s, v_s^*)ds)_{t\leq T}$ which is an $\mathcal{F}_t$-Brownian motion under the probability $\bp^{u,v^*}$, whose density w.r.t. $\bp$ is defined by $\zeta_T:=\zeta_T(\int_0^{.}\sigma^{-1}(s,X_s^{0,x})f(s,X_s^{0,x},u_s,v_s^*)dB_s)$ as defined in (\ref{new prob}). On the other hand, let us denote:
\bes
\Gamma_t^1 :=(\dtusvs-\dtuvs)^+\exp\{\int_0^t h_1(s, \xso, u_s, v_s^*)ds\},\qquad t\leq T.
\ees

\no Taking into account of \eqref{3.22}, we then conclude by It\^o's formula and Girsanov's transformation that, for $t\leq T$,
\begin{align} \label{Gamma}
d\Gamma_t^1 &= -\exp\{\int_0^t h_1(s, X_s^{0,x}, u_s, v_s^*)ds\}\times \nonumber \\
&\qquad \times\left[\dtusvs\Delta_t^1 dt-(\dtusvs \!Z_t^1 -\dtuvs\ztuvs)dB_t^{u, v^*}-dL_t^0 \right],
\end{align}
\no where 
\bes
\Delta_t^1= H_1(t, \xto, Z_t^1, u_t^*, v_t^*)- H_1(t, \xto, Z_t^1, u_t, v_t^*) \leq 0,
\ees
\no which is obtained by the generalized Isaacs' Assumption (A3)-(i). Next, let us define the stopping time $\tau_n$ as follows:
\begin{equation*}
\tau_n= \inf\{t\geq 0, |D_t^{u,v^*}|+|D_t^{u^*,v^*}|+\int_0^t ( |Z_s^1|^2+|Z_s^{1,(u, 
\vs)}|^2)ds \geq n\}\wedge T. 
\end{equation*} 
\no The sequence of stopping times $(\tau_n)_{n\geq 0}$ is of stationary type and converges to $T$ as $n\rightarrow \infty$. We then claim that, $\int_0^{t\wedge \tau_n}\! \exp\{\int_0^s h_1(r, \xro, u_r, v_r^*)dr\}1_{\{\dsusvs-\dsuvs>0\}}(\dsusvs Z_s^1\!-\!\dsuvs\zsuvs)dB_s^{u, \vs}$ is a $\mathcal{F}_t$-martingale under the probability $\bp^{u,v^*}$ as the following expectation
\begin{align}\label{3.24}
&\textbf{E}^{u, \vs }\left[\int_0^{\tau_n}e^{2\int_0^s h_1(r, \xro, u_r, v_r^*)dr} (\dsusvs Z_s^1-\dsuvs\zsuvs)^2ds\right]\nonumber\\
&\leq \textbf{E}^{u, \vs }\left[\int_0^{\tau_n}e^{2\int_0^s h_1(r, \xro, u_r, v_r^*)dr} \left(2 |\dsusvs|^2 |Z_s^1|^2+ 2|\dsuvs|^2 |\zsuvs|^2\right)ds\right]\nonumber\\
&\leq \textbf{E}^{u, \vs }\left[\sup_{0\leq s\leq \tau_n} \left\{2e^{2C_h(1+|X_s^{0,x}|^\gamma)} |\dsusvs|^2 \right\} \cdot \int_0^{\tau_n} |Z_s^1|^2\right]+ \nonumber\\
&\quad \textbf{E}^{u, \vs }\left[\sup_{0\leq s\leq \tau_n} \left\{2e^{2C_h(1+|X_s^{0,x}|^\gamma)} |\dsuvs|^2 \right\} \cdot \int_0^{\tau_n} |Z_s^{1, (u,v^*)}|^2\right]
\end{align}

\no is finite which is the consequence of the definition of $\tau_n$ and the estimate (\ref{estimate euv of X sigma exp}). Considering that $L_t^0$ is an increasing process, therefore, $\int_{t\wedge \tau_n}^{\tau_n}\exp\{\int_0^s h_1(r, X_r^{0,x}, u_r, v_r^*)dr\} dL_s^0$ is positive. Now returning to equation (\ref{Gamma}), then taking integral on interval $(t\wedge \tau_n, \tau_n)$ and conditional expectation w.r.t. $\mathcal{F}_{t\wedge \tau_n}$ under the probability $\bp^{u, v^*}$,  yield that,
\[
\Gamma_{t\wedge \tau_n}^1 \leq \textbf{E}^{u,\vs}\left[\Gamma^1_{\tau_n}\Big|\mathcal{F}_{t\wedge \tau_n}\right],
\]
i.e.,
\be \label{t tau n}
\textbf{E}^{u,\vs}\Gamma_{t\wedge \tau_n}^1\leq \textbf{E}^{u,\vs}\Gamma_{\tau_n}^1.
\ee  
\no Indeed, for any $p>1$, $1<q<p$, and given $1<\gamma<2$, we have,
\begin{align}\label{gamma tau n}
\textbf{E}^{u,\vs}&\left[\sup_{0\leq t\leq T}\abs{\Gamma^1_t}^q\right]\nonumber\\ 
&=\textbf{E}^{u,\vs}\Big[\sup_{0\leq t\leq T}\Big\{|D_t^{\us,\vs}-D_t^{u,\vs}|^q \exp\{q\int_0^th_1(s, \xso, u_s, v_s^*)ds\}\Big\}\Big]\nonumber\\
&\leq C\{\textbf{E}^{u,\vs}\Big[\sup_{0\leq t\leq T}e^{pY_t^1}+\sup_{0\leq t\leq T}e^{pY_t^{1,(u, \vs)}}\Big]\nonumber\\
&\quad + \textbf {E}^{u,\vs}\Big[\sup_{0\leq t\leq T}e^{q\cdot \frac{p}{p-q}C_h(1+|\xto|^{\gamma})}\Big]\}.
\end{align}
\no Indeed, for any $p>1$, $Y^1\in \mathcal{D}_T^p(\R, d\bp^{u,v^*})$, since we assume  $Y_t^1=\varpi^1(t, X_t^{0,x})$ where $\varpi^1$ is deterministic and of subquadratic growth and finally \eqref{estimate euv of X sigma exp}. Meanwhile, $Y^{1,(u,v^*)}\in \mathcal{D}_T^p(\R, d\bp^{u,v^*})$ by \eqref{BSDE uvs}.  Therefore, \eqref{gamma tau n} is finite. As the sequence $(\Gamma_{\tau_n}^1)_{n\geq 1}$ converges to $\Gamma_T^1=0$ as $n\rightarrow \infty$, ${\bf{P}}^{u,v^*}$-a.s., it then also converges to $0$ in $\mathcal{L}^{1}(d\bp^{u, \vs})$ since it is uniformly integral. 

Next, by passing $n$ to the limit on both sides of (\ref{t tau n}) and using the Fatou's lemma, we are able to show $\textbf{E}^{u,v^*}[\Gamma_t^1]= 0,\ \forall t\leq T$, which implies $e^{Y_t^1}\leq e^{{Y_t}^{u, \vs}}$, $\bp$-a.s., since the probabilities $\bp^{u, v^*}$ and $\bp$ are equivalent. Thus, $e^{Y_0^1}=J^1(u^*, v^*)\leq e^{Y_0^{1,(u, v^*)}}=J^1(u, v^*)$. In the same way, we can show that for arbitrary element $v\in \mathcal{M}_2$, then, $e^{Y_0^2}=J^2(u^*, v^*)\leq e^{Y_0^{2,(u^*, v)}}=J^2(u^*, v)$, which indicate that, $(\us, \vs)$ is an equilibrium point of the game. 
\end{proof}
\medskip

\section{Existence of solutions for markovian BSDE}

In Section \ref{sec3}, we provide the existence of the Nash equilibrium point under appropriate conditions. It remains to show that the BSDEs \eqref{main BSDE} have solutions as desired in Theorem \ref{th nash}. Therefore, in this section, we focus on this objective.
\medskip

We firstly recall the notion of domination. 
\subsection{Measure domination}

\begin{definition}\textbf{: $\mathcal{L}^{q}$-Domination condition}
\\
Let $q\in ]1,\infty[$ be fixed. For a given $t_1\in [0,T]$, a family of probability measures
$\{\nu_1(s,dx), s\in [t_1, T]\}$ defined on $\R^m$ is said to be $\mathcal{L}^{q}$-
dominated by another family of probability measures
$\{\nu_0(s,dx), s\in [t_1, T]\}$, if for any $\delta \in(0, T-t_1]$,
there exists an application $\phi^\d_{t_1}: [t_1+\d, T]\times \R^m \rightarrow \R^+$ such that:
\\

(i) $\nu_1$(s, dx)ds= $\phi^\d_{t_1}$(s, x)$\nu_0$(s, dx)ds on $[t_1+\delta, T]\times$
  $\R^m$.
  
(ii) $\forall k\geq 1$, $\phi^\d_{t_1}(s,x) \in \mathcal{L}^q([t_1+\delta, T]\times [-k, k]^m$; $\nu_0(s,
  dx)ds)$.\qed
\end{definition}\label{definition of lp dominated}
\medskip

We then have:

\begin{lemma}\label{aronson's estimate}
\no Let $q\in ]1,\infty[$ be fixed, $(t_0,x_0)\in \esp$ and let $(\theta_s^{t_0,x_0})_{t_0\leq s\leq T}$
be the solution of SDE (\ref{SDE b}). If the diffusion coefficient function $\sigma $ satisfies (\ref{horm}), then for any $s\in (t_0,T]$, the law $\bar \mu(t_0, x_0; s, dx)$ of
$\theta_s^{t_0,x_0}$ has a density function $\rho_{t_0,x_0}(s,x)$, w.r.t. Lebesgue measure $dx$, which satisfies the following estimate: $\forall (s,x)\in (t_0,T]\times \R^m$, 
\begin{equation}\label{aron-est}
\varrho_1(s-t_0)^{-\frac{m}{2}}exp\left[-\frac{\Lambda\abs{x-x_0}^2}{s-t_0}\right]\leq
\rho_{t_0,x_0}(s,x)\leq
\varrho_2(s-t_0)^{-\frac{m}{2}}exp\left[-\frac{\lambda\abs{x-x_0}^2}{s-t_0}\right]
\end{equation}
\no where $\varrho_1$, $\varrho_2$, $\Lambda$, $\lambda$ are real constants such that $\varrho_1 \leq \varrho_2$ and $\Lambda > \lambda$. Moreover for any $(t_1,x_1)\in [t_0,T]\times \R^m$, the family of laws $\{\bar\mu(t_1,x_1;s,dx), s\in [t_1,T]\}$ is
$L^q$-dominated by $\bar\mu(t_0,x_0;s,dx)$.
\end{lemma}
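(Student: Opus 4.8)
The plan is to dispatch the two assertions of the lemma in turn: the two-sided Gaussian bound \eqref{aron-est} on the transition density is essentially a citation, and the $L^q$-domination then follows from it by an elementary pointwise estimate of the ratio of densities. For the density bound, note that $(\theta_s^{t_0,x_0})_s$ is the diffusion whose (time-dependent) generator is $\mathcal{L}_s=\frac12\sum_{i,j}a_{ij}(s,x)\,\partial_{x_i}\partial_{x_j}+\sum_i b_i(s,x)\,\partial_{x_i}$, where $a:=\sigma\sigma^\top$. By Assumptions (A1) and \eqref{horm} the matrix $a$ is uniformly elliptic and bounded, and by (A4) the drift $b$ is bounded and Lipschitz; hence $\partial_s-\mathcal{L}_s$ is uniformly parabolic with bounded measurable coefficients. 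The classical Aronson theorem on two-sided Gaussian estimates for the fundamental solution of such an operator then yields at once that the law of $\theta_s^{t_0,x_0}$ has a density $\rho_{t_0,x_0}(s,\cdot)$ satisfying \eqref{aron-est}. I would simply invoke this result after checking that the coefficients meet its hypotheses.

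For the domination, fix $(t_1,x_1)\in[t_0,T]\times\R^m$ and $\delta\in(0,T-t_1]$. Both $\nu_0(s,dx)=\bar\mu(t_0,x_0;s,dx)$ and $\nu_1(s,dx)=\bar\mu(t_1,x_1;s,dx)$ are absolutely continuous w.r.t.\ Lebesgue measure for $s>t_0$ and $s>t_1$ respectively, and the lower bound in \eqref{aron-est} guarantees $\rho_{t_0,x_0}(s,x)>0$ for all $s>t_0$, in particular for $s\ge t_1+\delta>t_0$. I would therefore take
\[
\phi^\delta_{t_1}(s,x):=\frac{\rho_{t_1,x_1}(s,x)}{\rho_{t_0,x_0}(s,x)},\qquad (s,x)\in[t_1+\delta,T]\times\R^m,
\]
which is well defined, nonnegative, and satisfies property (i), namely $\nu_1(s,dx)\,ds=\phi^\delta_{t_1}(s,x)\,\nu_0(s,dx)\,ds$ on $[t_1+\delta,T]\times\R^m$. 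To check property (ii), for $k\ge1$ I would estimate
\[
\int_{t_1+\delta}^{T}\!\int_{[-k,k]^m}\big(\phi^\delta_{t_1}(s,x)\big)^q\rho_{t_0,x_0}(s,x)\,dx\,ds=\int_{t_1+\delta}^{T}\!\int_{[-k,k]^m}\frac{\rho_{t_1,x_1}(s,x)^q}{\rho_{t_0,x_0}(s,x)^{q-1}}\,dx\,ds
\]
by inserting the upper Gaussian bound for $\rho_{t_1,x_1}$ in the numerator and the lower Gaussian bound for $\rho_{t_0,x_0}$ in the denominator. On the domain of integration $s-t_1\ge\delta$ and (since $t_0\le t_1$) $s-t_0\ge\delta$, so the power-of-time factors are controlled by $\delta^{-mq/2}$ and $T^{m(q-1)/2}$; moreover, as $x$ stays in the compact $[-k,k]^m$ and $s-t_0\ge\delta$, the only positive Gaussian exponent $(q-1)\Lambda|x-x_0|^2/(s-t_0)$ is bounded by a finite constant, while the remaining exponent is nonpositive. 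Thus the integrand is uniformly bounded on a set of finite measure, and the integral is finite, giving (ii).

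The substantive input is the Aronson estimate, which is quoted rather than reproved. The one point requiring genuine care is that the density ratio $\rho_{t_1,x_1}/\rho_{t_0,x_0}$ may blow up as $s\downarrow t_1$, since the numerator concentrates at $x_1$ while the denominator stays regular there; this is precisely why the domination is demanded only on $[t_1+\delta,T]$ and over spatial compacts $[-k,k]^m$, and the cutoffs $\delta>0$ and $k<\infty$ are exactly what remove the singularity and force all the exponential and polynomial factors to be bounded. I expect this to be the main (and essentially the only) obstacle.
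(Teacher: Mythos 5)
Your proposal is correct and follows essentially the same route as the paper: both invoke Aronson's two-sided Gaussian estimate for the uniformly parabolic operator with bounded measurable coefficients, and both establish the $\mathcal{L}^q$-domination by taking $\phi^\delta_{t_1}$ to be the ratio of densities, inserting the upper Gaussian bound in the numerator and the lower bound in the denominator, and using $s-t_1\geq\delta$ together with the spatial compact $[-k,k]^m$ to bound the resulting expression uniformly, whence finiteness of the $\mathcal{L}^q$-integral. The only cosmetic difference is that the paper first bounds the ratio $\phi$ itself by a constant $\kappa$ and then integrates against $\bar\mu(t_0,x_0;s,dx)\,ds$, whereas you bound the full integrand $\phi^q\rho_{t_0,x_0}$ directly; the two computations are equivalent.
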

\begin{proof}
Since $\sigma$ satisfies (\ref{horm}) and $b$ is bounded, then by Aronson's result (see \cite{aronson}),
the law $\bar\mu(t_0, x_0; s, dx)$ of
$\theta_s^{t_0,x_0}$, $s\in ]t_0,T]$, has a density function $\rho_{t_0,x_0}(s,x)$ which satisfies estimate (\ref{aron-est}). 
\ms

Let us focus on the second claim of the lemma. Let $(t_1,x_1)\in [t_0,T]\times \R^m$ and $s\in (t_1,T]$. Then 
\begin{equation*}
\rho_{t_1,x_1}(s,x)=[\rho_{t_1,x_1}(s,x)\rho^{-1}_{t_0,x_0}(s,x)]\rho_{t_0,x_0}(s,x)=\phi_{t_1}(s,x)\rho_{t_0,x_0}(s,x)
\end{equation*}
\no with
$$\phi_{t_1,x_1}(s,x)=\left[\rho_{t_1,x_1}(s,x)\rho^{-1}_{t_0,x_0}(s,x)\right],
(s,x)\in (t_1,T]\times \R^m.$$
For any $\d\in (0,T-t_1]$, $\phi_{t_1,x_1}$ is defined on $[t_1+\d,T]$. Moreover for any $(s,x)\in [t_1+\d,T]$ it holds 
$$\begin{array}{ll}
\bar\mu(t_1,x_1;s,dx)ds&=\rho_{t_1,x_1}(s,x)dxds\\{}&=\phi_{t_1,x_1}(s,x)
\rho_{t_0,x_0}(s,x)dxds\\{}&=\phi_{t_1,x_1}(s,x)\bar\mu(t_0,x_0;s,dx)ds.
\end{array}$$
Next by (\ref{aron-est}), for any $(s,x)\in [t_1+\d,T]\times \R^m$,
\begin{equation*} 
0\le \phi_{t_1,x_1}(s,x)\leq
\frac{\varrho_2(s-t_1)^{-\frac{m}{2}}}{\varrho_1(s-t_0)^{-\frac{m}{2}}}exp\left[\frac{\Lambda\abs{x-x_0}^2}{s-t_0}-\frac{\lambda\abs{x-x_1}^2}{s-t_1}\right]\equiv \Phi_{t_1,x_1}(s,x).
\end{equation*}
It follows that for any $k\geq 0$, the function $\Phi_{t_1,x_1}(s,x)$ is bounded on $[t_1+\d,T]\times [-k,k]^m$ by a constant $\kappa$ which depends on $t_1$, $\d$, $\Lambda$, $\lambda$ and $k$. Next let $q\in (1,\infty)$, then 
$$\begin{array}{ll}
\int_{t_1+\d}^T\int_{[-k,k]^m}\Phi(s,x)^q\bar\mu(t_0,x_0;s,dx)ds&\leq \kappa^q \int_{t_1+\d}^T\int_{[-k,k]^m}\bar\mu(t_0,x_0;s,dx)ds\\{}&= \kappa^q\int_{t_1+\d}^Tds \textbf{E}[1_{[-k,k]^m}(\theta_s^{t_0,x_0})]\leq \kappa^q T.
\end{array}
$$
Thus $\Phi$ and then $\phi$ belong to $\mathcal{L}^q([t_1+\delta, T]\times [-k, k]^m$; $\nu_0(s,
  dx)ds)$. It follows that the family of measures 
$\{\bar\mu(t_1,x_1;s,dx), s\in [t_1,T]\}$ is
$\mathcal{L}^q$-dominated by $\bar\mu(t_0,x_0;s,dx)$. 
\end{proof} 
\ms 
  
As a by-product we have:
\begin{corollary}\label{dominated cor}
Let $x\in \R^m$ be fixed, $t\in [0,T]$, $s\in (t,T]$ and 
$\mu(t,x;s,dy)$ the law of $X^{t,x}_s$, \textit{i.e.}, 
$$\forall A \in \mathcal{B}(\R^m),\,\,
\mu(t,x;s,A)= \bp(X_s^{t,x}\in A).$$ If $\sigma$ satifies (\ref{horm}), then for any $q\in (1,\infty)$, the family 
of laws $\{\mu(t,x;s,dy), s\in [t,T]\}$ is $\mathcal{L}^q$-dominated by 
$\{\mu(0,x;s,dy), s\in [t,T]\}$.\qed
\end{corollary}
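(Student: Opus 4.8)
The plan is to observe that Corollary \ref{dominated cor} is simply Lemma \ref{aronson's estimate} applied to a driftless diffusion, so that almost nothing new has to be proved. The key point I would emphasize is that the process $X^{t,x}$ solving \eqref{SDE sigma} coincides with the solution of \eqref{SDE b} in the special case $b\equiv 0$: the null drift is trivially Lipschitz (with constant $0$) and bounded (with $C_b=0$), hence satisfies Assumption (A4). Consequently, once $\sigma$ obeys the uniform elliptic condition \eqref{horm} --- which is exactly the standing hypothesis of the corollary --- every conclusion of Lemma \ref{aronson's estimate} transfers verbatim to $X^{t,x}$.

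First I would apply the lemma with the reference pair $(t_0,x_0)=(0,x)$, which ensures that for each $s\in (0,T]$ the law $\mu(0,x;s,dy)$ of $X_s^{0,x}$ has a density $\rho_{0,x}(s,y)$ with respect to Lebesgue measure obeying the two-sided Gaussian bound \eqref{aron-est}. Then, taking $(t_1,x_1)=(t,x)$, which is an admissible choice because $(t,x)\in [0,T]\times \R^m=[t_0,T]\times \R^m$, the law $\mu(t,x;s,dy)$ likewise admits a density $\rho_{t,x}(s,y)$ satisfying the analogous estimate. The second assertion of the lemma then yields directly that $\{\mu(t,x;s,dy),\ s\in [t,T]\}$ is $\mathcal{L}^q$-dominated by $\{\mu(0,x;s,dy),\ s\in [t,T]\}$ for every $q\in (1,\infty)$, with domination density given by the ratio $\phi_{t,x}(s,y)=\rho_{t,x}(s,y)\rho_{0,x}^{-1}(s,y)$.

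Since the analytic content --- the Gaussian upper and lower heat-kernel bounds and the resulting $\mathcal{L}^q$-integrability of the ratio $\phi_{t,x}$ on each slab $[t+\delta,T]\times [-k,k]^m$ --- has already been carried out inside the proof of Lemma \ref{aronson's estimate}, there is no genuine obstacle here. The only matter requiring a line of care is the bookkeeping: matching the indices $(t_0,x_0)\mapsto (0,x)$ and $(t_1,x_1)\mapsto (t,x)$, and noting that Aronson's estimate is insensitive to whether the (bounded) drift is zero or not, so that the driftless SDE \eqref{SDE sigma} fits the framework of \eqref{SDE b} without modification.
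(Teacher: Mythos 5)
Your proposal is correct and coincides with the paper's own reasoning: the paper offers no separate proof, stating the corollary as an immediate ``by-product'' of Lemma \ref{aronson's estimate}, which is exactly your observation that $X^{t,x}$ solves \eqref{SDE b} with $b\equiv 0$ (a drift trivially satisfying Assumption (A4)), so the lemma applies verbatim with $(t_0,x_0)=(0,x)$ and $(t_1,x_1)=(t,x)$.
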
 

\subsection{Existence of solutions for BSDE \eqref{main BSDE}}
Now, we are well-prepared to provide the existence of solution for BSDE (\ref{main BSDE}).
\begin{theorem}
Let $x\in \R^m$ be fixed. Then under Assumptions (A1)-(A3), there exist two pairs of $\mathcal{P}$-measurable processes $(Y^i, Z^i)$ with values in $\R^{1+m}$,  $i=1,2,$ and two deterministic functions $\varpi^i(t,x)$ which are of subquadratic growth, i.e. $|\varpi^i(t,x)|\leq C(1+|x|^{\gamma})$ with $1<\gamma<2$, $i=1,2$ such that, 
\begin{equation}\label{main BSDE in th}
\left\{
\begin{aligned}
\bp&\text{-a.s.}, \forall t\leq T, Y_t^i=\varpi^i(t,X_t^{0,x}) \text{ and } Z^i \text{is dt-square integrable } \bp\text{-a.s.};\\  
Y_t^i&=g^i(\xto)+\int_t^T\{H_i(s, \xso, Z_s^i, (\us, \vs)(s, \xso, Z_s^1, Z_s^2))+\frac{1}{2}\abs{Z_s^i}^2\}ds \\
&\quad -\int_t^T Z_s^idB_s, \quad \forall t\leq T.
\end{aligned} 
\right.
\end{equation} 
\end{theorem}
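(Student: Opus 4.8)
The plan is to follow the route announced in the introduction: remove the quadratic term by an exponential change of variable, and then treat the resulting system, whose generator is only continuous and carries an unbounded ($\omega$ by $\omega$ linear growth) coefficient in front of $Z$, by approximation together with a compactness argument resting on the $\mathcal{L}^q$-domination of Corollary \ref{dominated cor}. First I would set $\bar Y^i:=e^{Y^i}$ and $\bar Z^i:=\bar Y^i Z^i$. Repeating the It\^o computation of the proof of Proposition \ref{prop BSDE uv} componentwise, the half-quadratic term $\tfrac12|Z^i|^2$ cancels, and using the generalized Isaacs identity (A3)(i) the $i$-th generator of \eqref{main BSDE in th} becomes $\bar Y^i H_i^*$; hence solving \eqref{main BSDE in th} is equivalent to producing a strictly positive solution of
\begin{equation*}
\bar Y_t^i = e^{g^i(\xTo)} + \int_t^T \bar Y_s^i\, H_i^*\big(s,\xso,\ \bar Z_s^1/\bar Y_s^1,\ \bar Z_s^2/\bar Y_s^2\big)\,ds - \int_t^T \bar Z_s^i\, dB_s,\quad i=1,2.
\end{equation*}
The point of this reduction is that, by (A3)(ii), the map $(z^1,z^2)\mapsto H_i^*(s,x,z^1,z^2)$ is now jointly continuous, and the two components are coupled only through the selectors evaluated at $(\bar Z^1/\bar Y^1,\bar Z^2/\bar Y^2)$. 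Note that $\bar Y^i>0$ and, by the representation of Proposition \ref{prop BSDE uv}, is bounded below by a strictly positive (random) constant, so the quotients $\bar Z^i/\bar Y^i$ are well defined and the inverse transform $Y^i=\ln\bar Y^i$, $Z^i=\bar Z^i/\bar Y^i$, $\varpi^i=\ln\bar\varpi^i$ will make sense at the end.

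Next I would regularize the transformed system. Because the $\bar Z$-coefficient $\sigma^{-1}f$ is merely of linear growth in $x$ and $H_i^*$ is only continuous in the $Z$-variables, I would approximate in two directions at once, treating the pair $(\bar Y^1,\bar Y^2)$ jointly since the system is coupled: truncate the state-dependence to $\{|x|\le n\}$ and smooth $(H_1^*,H_2^*)$ in $(z^1,z^2)$ by an inf/sup-convolution of Lepeltier--San Martin type, so as to obtain generators $F_i^n$ that are globally Lipschitz with bounded coefficients and that converge to the original generator pointwise with the correct monotonicity. For each fixed $n$, classical existence for Lipschitz BSDE systems furnishes a solution $(\bar Y^{i,n},\bar Z^{i,n})$; since the data are Markovian, this solution admits a deterministic representation $\bar Y^{i,n}_t=\bar\varpi^{i,n}(t,\xto)$.

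I would then establish a priori bounds that are \emph{uniform in $n$}. Using the probabilistic representation exactly as in Proposition \ref{prop BSDE uv}, the sub-quadratic growth assumptions (A2)(ii)--(iii), the exponential moment estimates \eqref{estimate of X sigma exp}--\eqref{estimate euv of X sigma exp}, and the $L^{p_0}$-integrability of the Girsanov density from Corollary \ref{coro lp}, one obtains two-sided controls of the form $0<\bar\varpi^{i,n}(t,x)\le e^{C(1+|x|^{\gamma})}$, together with $\sup_n \textbf{E}[\sup_{t\le T}|\bar Y^{i,n}_t|^p]<\infty$ for every $p>1$ and a matching $\mathcal{H}^q$-bound on $\bar Z^{i,n}$ for some $q>1$. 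These bounds simultaneously deliver the eventual sub-quadratic growth of $\varpi^i$ and localize the unbounded linear term $\sigma^{-1}f$.

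The crux is the passage to the limit, and it is here that I expect the main difficulty. From the uniform bounds the functions $\bar\varpi^{i,n}$ converge, along a subsequence and monotonically by construction, to limits $\bar\varpi^i$ for Lebesgue-almost every $(t,x)$; the decisive step is to upgrade this Lebesgue-a.e.\ convergence to convergence $ds\otimes d\bp$-a.e.\ along the trajectories $\xso$, which is precisely what the $\mathcal{L}^q$-domination of Corollary \ref{dominated cor} provides, and which is needed to identify the limit of the merely continuous generator $H_i^*$. One must also prove \emph{strong} convergence of $\bar Z^{i,n}$ in $\mathcal{H}^q$, which I would obtain by applying It\^o's formula to $|\bar Y^{i,n}-\bar Y^{i,m}|^2$ (or to a suitable test function) and using the uniform integrability guaranteed by the exponential moments to control the cross terms carrying the unbounded coefficient $\sigma^{-1}f$. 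Passing to the limit then yields a positive solution $(\bar Y^i,\bar Z^i)$ of the transformed system with $\bar Y^i=\bar\varpi^i(t,\xto)$; reversing the change of variable via $Y^i=\ln\bar Y^i$, $Z^i=\bar Z^i/\bar Y^i$, $\varpi^i=\ln\bar\varpi^i$ produces the desired solution of \eqref{main BSDE in th}, the sub-quadratic growth of $\varpi^i$ following from the two-sided exponential bounds on $\bar\varpi^i$. The essential obstacle throughout is this limit in a non-Lipschitz generator with state-dependent unbounded $Z$-coefficient: the domination condition converts a.e.\ convergence of the value functions into a.e.\ convergence along the diffusion, while the exponential estimates legitimize the limit inside the stochastic and Lebesgue integrals.
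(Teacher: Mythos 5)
Your overall architecture (exponential transform $\bar Y^i=e^{Y^i}$, $\bar Z^i=\bar Y^iZ^i$, Lipschitz approximation of the transformed generator, uniform two-sided bounds $e^{-C(1+|x|^\gamma)}\le\bar\varpi^{i,n}(t,x)\le e^{C(1+|x|^{\gamma})}$ plus an $\mathcal{H}^2$ bound on $\bar Z^{i,n}$, passage to the limit, inverse transform) coincides with the paper's Steps 1--3 and 6, and your compact rewriting of the transformed generator as $\bar Y^i H_i^*(s,X_s^{0,x},\bar Z^1_s/\bar Y^1_s,\bar Z^2_s/\bar Y^2_s)$ is correct. However, there is a genuine gap at the decisive step, the convergence of the approximations. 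You construct the approximating generators by an inf/sup-convolution ``of Lepeltier--San Martin type'' and then assert that the value functions $\bar\varpi^{i,n}$ converge ``monotonically by construction'' for Lebesgue-a.e.\ $(t,x)$. This cannot be justified: the Lepeltier--San Martin scheme converts monotone convergence of generators into monotone convergence of solutions \emph{via the comparison theorem}, and no comparison theorem is available for the coupled two-dimensional system \eqref{BSDE exp}, whose $i$-th generator depends on $(\bar Y^{3-i},\bar Z^{3-i})$ through the selectors $(u^*,v^*)$. The paper invokes comparison only in a ``frozen'' scalar form (Step 2, bounding $\bar Y^{1n}$ by solutions of auxiliary scalar BSDEs with the other component treated as a given process); it never compares two indices $n,m$ of the coupled system, and indeed no monotonicity in $n$ holds. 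Since uniform bounds alone do not yield an a.e.-convergent subsequence of $(\bar\varpi^{i,n})_n$, the starting point of your limit passage is missing.

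The paper's substitute for this missing ingredient --- and the place where Corollary \ref{dominated cor} really enters --- is different from the use you propose. From the Markovian representation $\varsigma^{in}(t,x)=\textbf{E}\big[e^{g^i(X_T^{t,x})}+\int_t^T F^{in}(s,X_s^{t,x})ds\big]$, with $F^{in}=G^{in}(\cdot,\varsigma^{1n},\varsigma^{2n},\mathfrak{z}^{1n},\mathfrak{z}^{2n})$, one shows that $(F^{in})_n$ is bounded in $\mathcal{L}^q(\mu(0,x;s,dy)ds)$ for some $1<q<2$, extracts a \emph{weakly} convergent subsequence, and then uses the $\mathcal{L}^{q/(q-1)}$-domination $\mu(t,x;s,dy)ds=\phi_{t,x}(s,y)\,\mu(0,x;s,dy)ds$ to prove that $(\varsigma^{in}(t,x))_n$ is a Cauchy sequence for \emph{every} $(t,x)$: the domination is what allows the weak convergence, a priori tied to the law started at $(0,x)$, to be tested against all other starting points. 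Pointwise convergence of the value functions is thus a \emph{conclusion} of the weak-compactness-plus-domination argument, not an input; your use of domination (upgrading an assumed Lebesgue-a.e.\ convergence to convergence along the diffusion) is both weaker and predicated on the unproved monotonicity. Once this step is repaired, the rest of your plan (strong $\mathcal{H}^2$ convergence of $\bar Z^{i,n}$ via It\^o's formula applied to $|\bar Y^{i,n}-\bar Y^{i,m}|^2$, identification of the limit generator by continuity and dominated convergence, inversion $Y^i=\ln\bar Y^i$, $\varpi^i=\ln\bar\varpi^i$) does go through as in Steps 4--6 of the paper.
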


\begin{proof} We shall divide the proof into several steps. Our plan is the following. We apply the exponential exchange (see e.g. \cite{kobylanski}) to eliminate the quadratic term in the generator. The pair of the solution processes (resp. the generator) is denoted by $(\bar{Y}, \bar{Z})$ (resp. $G$). We then approximate the new generator $G$ by the Lipschitz continuous ones, which we denoted by $G^n$, such that the classical results about BSDE can be applied. It follows that, for each $n$, the BSDE with generator $G$ being replaced by $G^n$, has a solution $(\bar{Y}^n, \bar{Z}^n)$. After that, we give the uniform estimates of the solutions, as well as the convergence property. In the convergence step, the measure domination property Corollary \ref{dominated cor} plays a crucial role in passing from the weak limit to a strong sense one . Finally, we verify that the limits of the sequences are exactly the solutions of the BSDE. 
\paragraph{\emph{Step 1.}}\label{step1} \textit{Exponential exchange and approximation.}
\medskip

For $t\in [0, T]$, and $i=1,2,$ let us denote by:
\be\label{exp exchange}
\left\{
\begin{aligned}
\bar{Y}_t^i&=e^{Y_t^i};\\
\bar{Z}_t^i&=\bar{Y}_t^iZ_t^i.
\end{aligned}
\right.
\ee
\no Then, BSDE (\ref{main BSDE in th}) reads, for $t\in [0,T]$ and $ i=1,2$,
\begin{align}\label{BSDE exp}
\bar{Y}_t^i=e^{g^i(\xTo)}&+\int_t^T\mathbbm{1}_{\bar{Y}_s^i>0}\{\bar{Z}_s^i \sigma^{-1}(s, X_s^{0,x})f(s, \xso, (\us, \vs)(s, \xso, \frac{\bar{Z}_s^1}{\bar{Y}_s^1}, \frac{\bar{Z}_s^2}{\bar{Y}_s^2}))\nonumber\\
 &+ \bar{Y}_s^i h_i(s, \xso, (\us, \vs)(s, \xso, \frac{\bar{Z}_s^1}{\bar{Y}_s^1}, \frac{\bar{Z}_s^2}{\bar{Y}_s^2}))\}ds- \int_t^T \bar{Z}_s^idB_s.
\end{align}
Let us deal with the case $i=1$ for example and the case $i=2$  follows in the same way.  Inspiring by the mollify technique in \cite{hamadene1997},  we first denote here the generator of \eqref{BSDE exp} by $G^1:[0,T]\times\R^m\times \R^{+*}\times\R^{+*}\times\R^{2m}\longrightarrow\R$ (by $\R^{+*}$, we refer to $\R^+\backslash\{0\}$ ), \textit{i.e.}
$$
\begin{array}{ll}
G^1(s,x,y^1,y^2,z^1,z^2)&=\mathbbm{1}_{y^1>0}\{z^1\sigma^{-1}(s,x)f(s,x,(\us, \vs)(s,x,\frac{z^1}{y^1},\frac{z^2}{y^2}))\\
&\quad+y^1h(s,x,(\us,\vs)(s,x,\frac{z^1}{y^1}, \frac{z^2}{y^2}))\}
\end{array}
$$
\no which is still continuous w.r.t $(y^1,y^2,z^1,z^2)$ considering the Assumption (A3)-(ii) and the transformation \eqref{exp exchange}. Let $\xi$ be an element of $C^{\infty}(\R^{+*}\times\R^{+*}\times\R^{2m}, \R)$ with compact support and satisfying:
\bes
\int_{\R^{+*}\times\R^{+*}\times\R^{2m}}\xi(y^1,y^2,z^1,z^2)dy^1dy^2dz^1dz^2=1.
\ees
\no For $(t,x,y^1,y^2,z^1,z^2)\in [0,T]\times \R^m\times \R^{+*}\times\R^{+*}\times\R^{2m}$, we set,
\begin{align*}
\tilde{G}^{1n}\left(t,x,y^1,y^2,z^1,z^2\right)&=\int_{\R^{+*}\times \R^{+*}\times \R^{2m}}n^4G^1(s, \varphi_n(x),p^1,p^2,q^1,q^2)\cdot \\
&\xi\left(n(y^1-p^1),n(y^2-p^2),n(z^1-q^1),n(z^2-q^2)\right)dp^1dp^2dq^1dq^2,
\end{align*}
\no where $\varphi_n(x)=((x_j\vee (-n))\wedge n)_{j=1,2,...,m}$, for $x=(x_j)_{j=1,2,...,m}\in \R^m$.
\no We next define $\psi\in C^{\infty}(\R^{2+2m},\R)$ by,
\bes
\psi(y^1,y^2,z^1,z^2)=
\left\{
\begin{aligned}
1,\quad \abs{y^1}^2+\abs{y^2}^2+\abs{z^1}^2+\abs{z^2}^2\leq 1,\\
0,\quad \abs{y^1}^2+\abs{y^2}^2+\abs{z^1}^2+\abs{z^2}^2\geq 4.
\end{aligned}
\right.
\ees
\no Then, we define the measurable function sequence $(G^{1n})_{n\geq 1}$ as follows: $\forall (t,x,y^1,y^2,z^1,z^2)\in [0,T]\times \R^m\times\R\times\R\times\R^{2m}$,
\bes
G^{1n}(t,x,y^1,y^2,z^1,z^2)=\psi(\frac{y^1}{n},\frac{y^2}{n},\frac{z^1}{n},\frac{z^2}{n})\tilde{G}^{1n}(t,x,\psi_n(y^1),\psi_n(y^2),z^1,z^2),
\ees
\no where for each $n$, $\psi_n(y)$ is a continuous function for $y\in \R$, and $\psi_n(y)=1/n$ if $y\leq 0$; $\psi_n(y)=y$ if $y\geq 1/n$. We have the following properties:
\begin{equation}\label{abcd}
\left\{
\begin{array}[c]{l}%
\ (a)\  G^{1n}\  \text{is uniformly lipschitz w.r.t}\ (y^1,y^2,z^1,z^2);\smallskip\\
\ (b)\  |G^{1n}(t,x,y^1,y^2,z^1,z^2)|\leq C_fC_{\sigma}(1+|\varphi_n(x)|)|z^1|+C_h(1+|\varphi_n(x)|^{\gamma})(y^1)^+; \smallskip\\
\ (c)\ |G^{1n}(t,x,y^1,y^2,z^1,z^2)|\leq c_n, \ \text{for any}\  (t,x,y^1,y^2,z^1,z^2);\smallskip\\
\ (d)\ \text{For any}\  (t,x)\in [0,T]\times \R^m,  \varepsilon>0 \text{ and }\textbf{K}\ \text{a compact subset of}\  [\varepsilon, \frac{1}{\varepsilon}]^2\times \R^{2m},\\
\qquad \sup\limits_{(y^1,y^2,z^1,z^2)\in \textbf{K}}\abs{G^{1n}(t,x,y^1,y^2,z^1,z^2)-G^1(t,x,y^1,y^2,z^1,z^2)}\rightarrow 0,\ \text{as} \ n\rightarrow \infty.
\end{array}
\right.
\end{equation}

The same technique provides the sequence $(G^{2n})_{n\geq 1}$, which is indeed, the approximation of function $G^2$. For each $n\geq 1$ and $(t,x)\in [0,T]\times \R^m$, it is a direct result of \eqref{abcd}-(a) that (see \cite{peng1990}), there exist two pairs of processes $(\bar{Y}_s^{1n;(t,x)}, \bar{Z}_s^{1n;(t,x)})_{t\leq s\leq T}, (\bar{Y}_s^{2n;(t,x)}, \bar{Z}_s^{2n;(t,x)})_{t\leq s\leq T}\in \mathcal{S}_{t,T}^2(\R)\times \mathcal{H}_{t,T}^{2}(\R^m)$, which satisfy, for $s\in[t,T]$,
\begin{equation}\label{BSDE bar n}
\left\{
\begin{aligned}
\bar{Y}_s^{1n;(t,x)}&=e^{g^1(X_T^{t,x})}+\int_s^TG^{1n}(r,X_r^{t,x}, \bar{Y}_r^{1n;(t,x)},\bar{Y}_r^{2n;(t,x)},\bar{Z}_r^{1n;(t,x)},\bar{Z}_r^{2n;(t,x)})dr\\
&\qquad\qquad \qquad-\int_s^T\bar{Z}_r^{1n;(t,x)}dB_r;\smallskip\\
\bar{Y}_s^{2n;(t,x)}&=e^{g^2(X_T^{t,x})}+\int_s^TG^{2n}(r,X_r^{t,x}, \bar{Y}_r^{1n;(t,x)},\bar{Y}_r^{2n;(t,x)},\bar{Z}_r^{1n;(t,x)},\bar{Z}_r^{2n;(t,x)})dr\\
&\qquad\qquad \qquad-\int_s^T\bar{Z}_r^{2n;(t,x)}dB_r.
\end{aligned}
\right.
\end{equation}

\no Meanwhile, the properties \eqref{abcd}-(a),(c) and the result of El karoui et al. (ref. \cite{karoui}) yield that, there exist two sequences of deterministic measurable applications $\varsigma^{1n}(resp.\ \varsigma^{2n}): [0,T]\times \R^m \rightarrow \R$ and $\mathfrak{z}^{1n}(resp.\  \mathfrak{z}^{2n}): [0,T]\times \R^m\rightarrow \R^m$ such that for any $s\in [t,T]$,
\be \label{ybar}
\bar{Y}_s^{1n;(t,x)}= \varsigma^{1n}(s,X_s^{t,x})\quad (resp.\  \bar{Y}_s^{2n;(t,x)}=\varsigma^{2n}(s,X_s^{t,x}))
\ee 
\no and
\bes
\bar{Z}_s^{1n;(t,x)}= \mathfrak{z}^{1n}(s,X_s^{t,x}) \quad (resp.\  \bar{Z}_s^{2n;(t,x)}=\mathfrak{z}^{2n}(s,X_s^{t,x})).
\ees
\no Besides, we have the following deterministic expression: for $i=1,2,$ and $n\geq 1$,
\be\label{uin}
\varsigma^{in}(t,x)= \textbf{E}\Big[e^{g^i(X_T^{t,x})}+\int_t^TF^{in}(s,X_s^{t,x})ds\Big],\quad \forall(t,x)\in [0, T]\times \R^m,
\ee
\no where,
\bes
F^{in}(s,x)= G^{in}(s,x,\varsigma^{1n}(s,x),\varsigma^{2n}(s,x),\mathfrak{z}^{1n}(s,x),\mathfrak{z}^{2n}(s,x)).
\ees

\paragraph{\emph{Step 2.}} \label{step 2}\textit{Uniform integrability of $(\bar{Y}^{1n;(t,x)})_{n\geq 1}$ for fixed $(t,x)\in [0,T]\times \R^m$.}
\medskip

In this step, we will deal with the case of $i=1$, the case of $i=2$ can be treated in a similar way. For each $n\geq 1$, let us consider BSDE as follows, for $ s\in [t,T]$,
\begin{align}\label{BSDE tild}
\tilde{Y}_s^{1n}&=e^{g^1(X_T^{t,x})}+\int_s^T \left\{ C_fC_{\sigma}(1+|\varphi_n(X_r^{t,x})|)|\tilde{Z}_r^{1n}|+C_h(1+|\varphi_n(X_r^{t,x})|^{\gamma})(\tilde{Y}_r^{1n})^+\right\}dr \nonumber\\
&\quad -\int_s^T\tilde{Z}_r^{1n}dB_r.
\end{align}
\no Obviously, for any $x\in \R^m$ and integer $n\geq 1$,  the application which to $(y, z)\in \R^{1+m}$ associates $C_fC_{\sigma}(1+\varphi_n(x))|z|+C_h(1+\varphi_n(x))^{\gamma}y^+$ is Lipchitz continuous. Besides, $e^{g^1(X_T^{t,x})}\in \mathcal{L}^p(d\bp)$, $\forall p\geq 1$ which is the consequence of Assumption (A2)-(iii) and \eqref{estimate of X sigma exp}. Therefore, from the result of Pardoux and Peng \cite{peng1992}, we know that a pair of solutions $(\yts, \zts)_{t\leq  s\leq T}\in \mathcal{S}_{t,T}^p(\R)\times \mathcal{H}_{t,T}^p(\R^m)$ exists for any $p> 1$. Moreover through an adaptation of the result given by El Karoui et al. (1997,\cite{karoui}), we can infer the existence of deterministic measurable function $\tilde{\varsigma}^{1n}$: $[0,T]\times \R^m \rightarrow \R$ such that, for any $s\in[t,T]$,
\begin{equation}\label{estimubarn}
\tilde{Y}_s^{1n}= \tilde{\varsigma}^{1n}(s, X_s^{t,x}).
\end{equation}
\no Next let us consider the process 
$$
B^n_s= B_s- \int_0^s 1_{[t,T]}(r)C_fC_{\sigma}(1+|\varphi_n(X_r^{t,x})|)\mbox{sign}(\tilde{Z}_r^{1n})dr,\,\, 0\leq s\leq T,
$$
which is, thanks to Girsanov's Theorem, a Brownian motion under the probability $\bp^n$ on $(\Omega, \mathcal{F})$ whose density with respect to $\bp$ is
$$\zeta_T:=\zeta_T\{C_fC_{\sigma}(1+|\varphi_n(X_s^{t,x})|)\mbox{sign}(\tilde{Z}_s^{1n})1_{[t,T]}(s)\},$$
where for any $z=(z^i)_{i=1,...,d}\in \R^m$, $\mbox{sign}(z)=(1_{[|z^i|\neq 0]}\frac{z^i}{|z^i|})_{i=1,...,d}$ and  $\zeta_T(\cdot)$ is
defined by (\ref{density fun}). Then (\ref{BSDE tild}) becomes
\begin{equation*}
\tilde{Y}_s^{1n}= e^{g^1(X_T^{t,x})}+\int_s^T
C_h(1+|\varphi_n(X_r^{t,x})|^{\gamma})(\tilde{Y}_r^{1n})^+dr-
\int_s^T \tilde{Z}_r^{1n} dB^n_r, \quad t\leq s\leq T.
\end{equation*}
\noindent Therefore, taking into account of (\ref{estimubarn}), we deduce,
\begin{equation*}
\tilde{\varsigma}^{1n}(t,x)= \textbf{E}^n\Big[e^{g^1(X_T^{t,x})+\int_t^T
C_h\left(1+\abs{\varphi_n(X_s^{t,x})}^{\gamma}\right)ds}|\mathcal{F}_t\Big],
\end{equation*}
\noindent where $\textbf{E}^n$ is the expectation under probability $\bp^n$.
Taking the expectation on both sides under the probability $\bp^n$
and considering $\tilde{\varsigma}^{1n}(t,x)$ is deterministic, one obtains,
\begin{equation*}
\tilde{\varsigma}^{1n}(t,x)= \textbf{E}^n\Big[e^{g^1(X_T^{t,x})+\int_t^T
C_h\left(1+\abs{\varphi_n(X_s^{t,x})}^{\gamma}\right)ds}\Big].
\end{equation*}
\noindent Then by the Assumption (A2)-(iii) we have: $\forall (t,x)\in \esp$, 
\begin{equation*}
 \begin{aligned}
\abs{\tilde{\varsigma}^{1n}(t,x)}&\leq \textbf{E}^n\Big[e^{C\sup_{0\leq s\leq T}(1+ \abs{X_s^{t,x}}^{\gamma})}\Big]\\
        &= \textbf{E}\Big[e^{C\sup_{0\leq s\leq T}(1+ \abs{X_s^{t,x}}^{\gamma})}\cdot\zeta_T\Big]. 
 \end{aligned}
\end{equation*}
\no By Lemma \ref{density function lp bounded}, there exists some $1<p_0<2$ (which does not depend on $(t,x)$), such that $\textbf{E}[|\zeta_T|^{p_0}]<\infty$. Applying Young's inequality, besides, considering \eqref{estimate of X sigma exp} yield that, 
\begin{equation*}
\begin{aligned}
|\tilde{\varsigma}^{1n}(t,x)|&\leq \textbf{E}\big[e^{\frac{Cp_0}{p_0-1}\sup_{0\leq s\leq T}(1+ \abs{X_s^{t,x}}^{\gamma})}\big]+\textbf{E}\big[|\zeta_T|^{p_0}\big]\\
&\leq e^{C(1+|x|^{\gamma})}.
\end{aligned}
\end{equation*}
\no Next taking into account point \eqref{abcd}-(b) and using comparison Theorem of BSDEs,  we obtain for any $s\in [t,T]$,
$$\tilde{Y}^{1n}_s=\tilde{\varsigma}^{1n}(s, X^{t,x}_s)\ge 
\bar{Y}^{1n;(t,x)}_s=\varsigma^{1n}(s, X^{t,x}_s).
$$
Then, by choosing $s=t$, we get that $\varsigma^{1n}(t,x)\leq e^{C(1+|x|^{\gamma})}$, $(t,x)\in \esp$. But in a similar way one can show that for any $(t,x)\in \esp$, $\varsigma^{1n}(t,x)\geq e^{-C(1+|x|^{\gamma})}$. Therefore,
\begin{equation}\label{growthu}
 e^{-C(1+|x|^{\gamma})}\leq \varsigma^{1n}(t,x)\leq e^{C(1+|x|^{\gamma})},\ (t,x)\in \esp.
\end{equation}
\no By \eqref{growthu},\eqref{ybar} and \eqref{estimate of X sigma exp}, we conclude, 
$\bar{Y}_s^{1n;(t,x)}\in \mathcal{S}_{t,T}^p(\R^m)$ holds, i.e., for any $p>1$, we have,
\begin{equation}\label{ybn belongs to sp}
\textbf{E}\Big[\sup_{t\leq s\leq T}\big|\bar{Y}_s^{1n;(t,x)}\big|^p\Big]<\infty.
\end{equation}
\paragraph{\emph{Step 3.}}\label{step3} \textit{Uniform integrability of $(\bar{Z}_s^{1n;(t,x)})_{t\leq s\leq T}$.}
\medskip

\noindent Recalling the equation \eqref{BSDE bar n} and making use of It\^o's formula with $(\bar{Y}_s^{1n;(t,x)})^2$, we obtain, in a standard way, the following result. 
\medskip

There exists a constant $C$ independent of $n$ and $t,x$ such that for any $t\leq T$, for $i=1,2$,
\begin{equation}\label{estimate of zbn}
\textbf{E}\big[\int_t^T|\bar{Z}_s^{1n;(t,x)}|^2ds\big]\le C.
\end{equation}
The proof is omitted for conciseness.
\paragraph{\emph{Step 4.}} \textit{There exists a subsequence of $((\bar{Y}_s^{1n;(0,x)},\bar{Z}_s^{1n;(0,x)})_{0\leq s\leq T})_{n\geq 1}$ which converges respectively to $(\bar{Y}^1_s, \bar{Z}^1_s)_{0\leq s\leq T}$, solution of the BSDE \eqref{BSDE exp}}. Moreover, $\bar{Y}_s^1>0$, $\forall s\in [0,T]$, $\bp$-a.s.
\medskip

Let us recall the expression (\ref{uin}) for case $i=1$,
\be\label{uin re}
\varsigma^{1n}(t,x)=\textbf{E}\Big[e^{g^1(\xT)}+\int_t^TF^{1n}(s,\xs)ds\Big],\qquad \forall (t,x)\in [0,T]\times \R^m.
\ee
\no We now apply property \eqref{abcd}-(b) in Step 1 combined with the uniform estimates (\ref{ybn belongs to sp}), (\ref{estimate of zbn}) and the Young's inequality to show that, for $1<q<2$,
\begin{equation}\label{F1n lp}
\begin{aligned}
\textbf{E}\Big[\int_{0}^T&|F^{1n}(s,\xso)|^qds\Big]\\
&\leq C\textbf{E}\Big[\int_{0}^T(1+|\varphi_n(\xso)|)^q|\bar{Z}_s^{1n;(0,x)}|^q+(1+|\varphi_n(\xso)|)^{\gamma q}|\bar{Y}_s^{1n;(0,x)}|^qds\Big]\\
&\leq  C\textbf{E}\Big[\Big(\int_{0}^T |\bar{Z}_s^{1n;(0,x)}|^2ds\Big)^{\frac{q}{2}}\Big(\int_{0}^T(1+|\xso|)^{\frac{2q}{2-q}}ds\Big)^{\frac{2-q}{2}}\Big]\\
&\qquad + C\textbf{E}\Big[ \sup_{0\leq s\leq T}{|\bar{Y}_s^{1n;(0,x)} |^q}\cdot\int_{0}^T (1+|\xso|)^{\gamma q}ds\Big]\\
&\leq C\{\textbf{E}\Big[\int_{0}^T |\bar{Z}_s^{1n;(0,x)}|^2ds\Big]+ \textbf{E}\Big[ \sup_{0\leq s\leq T}|\bar{Y}_s^{1n;(0,x)} |^2\Big]+1\}\\
&<\infty.
\end{aligned}
\end{equation}
\no Therefore, there exists a sub-sequence $\{n_k\}$ (for notation simplification, we still denote it by $\{n\}$) and a $\mathcal{B}([0, T])\otimes \mathcal{B}(\R^m)$-measurable deterministic function $F^1(s,y)$ such that:
\be\label{Fn weakly cov}
F^{1n}\rightarrow F^1 \ \text{weakly in}\ \mathcal{L}^q([0,T]\times \R^m; \mu(0,x;s,dy)ds).
\ee
Next we aim to prove that $(\varsigma^{1n}(t,x))_{n\geq 1}$ is a Cauchy
sequence for each $(t,x)\in[0,T]\times \R^m$. Now let $(t,x)$ be fixed, $\eta>0$, $k$, $n$ and $m\geq1$ be integers. From (\ref{uin re}), we have,
\begin{equation*}
\begin{aligned}
\abs{\varsigma^{1n}(t,x)-\varsigma^{1m}(t,x)}&=
\Big|\textbf{E}\Big[\int_t^TF^{1n}(s,X_s^{t,x})-F^{1m}(s,X_s^{t,x})ds\Big]\Big|\\
&\leq
\textbf{E}\Big[\int_t^{t+\eta}\left|F^{1n}(s,X_s^{t,x})-F^{1m}(s,X_s^{t,x})\right|ds\Big]\\
&\quad +\Big|\textbf{E}\Big[\int_{t+\eta}^T\left(F^{1n}(s,X_s^{t,x})-F^{1m}(s,X_s^{t,x})\right).\mathbbm{1}_{\{\abs{X_s^{t,x}}\leq
k\}}ds\Big]\Big|\\
&\quad +\Big|\textbf{E}\Big[\int_{t+\eta}^T\left(F^{1n}(s,X_s^{t,x})-F^{1m}(s,X_s^{t,x})\right).\mathbbm{1}_{\{\abs{X_s^{t,x}}>
k\}}ds\Big]\Big|,
\end{aligned}
\end{equation*}
\no where on the right side, noticing (\ref{F1n lp}), we obtain,
\begin{equation*}
\begin{aligned}
\textbf{E}&\Big[\int_t^{t+\eta}\abs{F^{1n}(s, X_s^{t,x})- F^{1m}(s,
X_s^{t,x})}ds\Big]\\
& \leq
\eta^{\frac{q-1}{q}}\{\textbf{E}\Big[\int_0^T\abs{F^{1n}(s,X_s^{t,x})-F^{1m}(s,X_s^{t,x})}^qds\Big]\}^{\frac{1}{q}}\leq C\eta^{\frac{q-1}{q}}.
\end{aligned}
\end{equation*}

\noindent At the same time, Corollary \ref{dominated cor} associates with the $\mathcal{L}^{\frac{q}{q-1}}$-domination property implies:
\begin{equation*}
\begin{aligned}
\Big|\textbf{E}\Big[\int_{t+\eta}^T&\left(F^{1n}(s,X_s^{t,x})-F^{1m}(s,X_s^{t,x})\right).\mathbbm{1}_{\{\abs{X_s^{t,x}}\leq
k\}}ds\Big]\Big|\\
&=
\Big|\int_{\R^m}\int_{t+\eta}^T(F^{1n}(s,\eta)-F^{1m}(s,\eta)).\mathbbm{1}_{\{\abs{\eta}\leq
k\}}\mu(t,x;s,d\eta)ds\Big|\\
&=
\Big|\int_{\R^m}\int_{t+\eta}^T(F^{1n}(s,\eta)-F^{1m}(s,\eta)).\mathbbm{1}_{\{\abs{\eta}\leq
k\}}\phi_{t,x}(s,\eta)\mu(0,x;s,d\eta)ds\Big|.
\end{aligned}
\end{equation*}

\noindent Since $\phi_{t,x}(s,\eta) \in \mathcal{L}^{\frac{q}{q-1}}([t+\eta,
T]\times [-k, k]^m$; $\mu(0,x; s, d\eta)ds)$, for $k\geq 1$, it
follows from (\ref{Fn weakly cov}) that for each $(t,x)\in [0, T]\times \R^m$, we have,
\begin{equation*}
\textbf{E}\Big[\int_{t+\eta}^T\left(F^{1n}(s, X_s^{t,x})-F^{1m}(s,
X_s^{t,x})\right)\mathbbm{1}_{\{\abs{X_s^{t,x}}\leq
k\}}ds\Big]\rightarrow 0\text{ as } n,m\rightarrow \infty.
\end{equation*}
\noindent Finally,
\begin{equation*}
\begin{aligned}
\Big|\textbf{E}\Big[\int_{t+\eta}^T&\left(F^{1n}(s,X_s^{t,x})-F^{1m}(s,X_s^{t,x})\right).\mathbbm{1}_{\{\abs{X_s^{t,x}}>
k\}}ds\Big]\Big|\\
&\leq C\{\textbf{E}\Big[\int_{t+\eta}^T
\mathbbm{1}_{\left\{\abs{X_s^{t,x}}>k\right\}}ds\Big]\}^{\frac{q-1}{q}}\{\textbf{E}\Big[\int_{t+\eta}^T\abs{F^{1n}(s,X_s^{t,x})-F^{1m}(s,X_s^{t,x})}^qds\Big]\}^{\frac{1}{q}}\\
&\leq Ck^{-\frac{q-1}{q}}
\end{aligned}
\end{equation*}

Therefore, for each $(t,x)\in [0,T]\times \R^m$, $(\varsigma^{1n}(t,x))_{n\geq 1}$ is a Cauthy sequence and then there
exists a borelian application $\varsigma^1$ on $[0,T]\times \R^m$, such that for each $(t,x)\in [0,T]\times \R^m$, $\lim_{n\rightarrow \infty}\varsigma^{1n}(t,x)=\varsigma^1(t,x)$, which indicates that for $t\in [0,T]$, $\lim_{n\rightarrow \infty} \bar{Y}_t^{1n;(0,x)}(\omega)=\varsigma^1(t,\xto),\quad \bp-a.s.$ Taking account of (\ref{ybn belongs to sp}) and the Lebesgue dominated convergence theorem, we obtain the sequence $((\bar{Y}_t^{1n;(0,x)})_{0\leq t\leq T})_{n\geq 1}$ converges to $\bar{Y}^1=(\varsigma^1(t, \xto))_{0\leq t\leq T}$ in $\mathcal{L}^p([0,T]\times\R^m)$ for any $p>1$, that is:
\be \label{conv ytn}
\textbf{E}\Big[\int_{0}^T|\bar{Y}_t^{1n;(0,x)}-\bar{Y}_t^1|^pdt\Big]\rightarrow 0, \quad \text{as}\ n\rightarrow \infty.
\ee
\medskip

Next, we will show that for any $p>1$, $\bar{Z}^{1n;(0,x)}=(\mathfrak{z}^{1n}(t,\xto))_{0\leq t\leq T})_{n\geq 1}$ has a limit in $\mathcal{H}_{T}^2(\R^m)$. Besides, $(\bar{Y}^{1n;(0,x)})_{n\geq 1}$ is convergent in $\mathcal{S}_{T}^2(\R)$ as well.
\medskip

\noindent We now focus on the first claim. For $n,m \geq 1$ and $0 \leq t\leq T$, using It\^{o}'s formula with {$(\ybtn-\ybtm)^2$} (we omit the subscript $(0,x)$ for convenience) and considering \eqref{abcd}-(b) in Step 1, we get,
\begin{align*}
&\big|\ybtn-\ybtm\big|^2+ \int_t^T \abs{\zbsn-\zbsm}^2ds\\
&= 2\int_t^T(\ybsn-\ybsm)(G^{1n}(s,
X_s^{0,x},\ybsn,\bar{Y}_s^{2n},\zbsn,\bar{Z}_s^{2n})-\\
&\quad -G^{1m}(s,
X_s^{0,x},\ybsm,\bar{Y}_s^{2m},\zbsm,\bar{Z}_s^{2m}))ds-2\int_t^T(\ybsn-\ybsm)(\zbsn-\zbsm)dB_s\\
&\leq C\int_t^T
\abs{\ybsn-\ybsm}\Big[(\abs{\zbsn}+\abs{\zbsm})(1+|X_s^{0,x}|)+(|\ybsn|+|\ybsm|)(1+|X_s^{0,x}|)^{\gamma}\Big]ds\\
&\quad - 2\int_t^T(\ybsn-\ybsm)(\zbsn-\zbsm)dB_s.
\end{align*}
\no Since for any $x,y,z\in \R$, $|xyz|\leq \frac{1}{a}|x|^a+\frac{1}{b}|y|^b+\frac{1}{c}|z|^c$ with $\frac{1}{a}+\frac{1}{b}+\frac{1}{c}=1$, then, for any $\varepsilon>0$, we have,
\begin{equation}\label{4.27}
\begin{aligned}
&|\bar{Y}_t^{1n}-\bar{Y}_t^{1m}|^2+\int_t^T|\bar{Z}_s^{1n}-\bar{Z}_s^{1m}|^2ds\\
&\leq C\Big\{\frac{\varepsilon^2}{2}\int_t^T(|\bar{Z}_s^{1n}|+|\bar{Z}_s^{1m}|)^2ds+\frac{\varepsilon^4}{4}\int_t^T(1+|X_s^{0,x}|)^4ds+\frac{1}{4\varepsilon^8}\int_t^T|\bar{Y}_s^{1n}-\bar{Y}_s^{1m}|^4ds\\
&\quad+\frac{\varepsilon^2}{2}\int_t^T(|\bar{Y}_s^{1n}|+|\bar{Y}_s^{1m}|)^2ds+\frac{\varepsilon^4}{4}\int_t^T(1+|X_s^{0,x}|)^{4\gamma} ds+\frac{1}{4\varepsilon^8}\int_t^T|\bar{Y}_s^{1n}-\bar{Y}_s^{1m}|^4ds\Big\}\\
&\quad-2\int_t^T(\bar{Y}_s^{1n}-\bar{Y}_s^{1m})(\bar{Z}_s^{1n}-\bar{Z}_s^{1m})dB_s.
\end{aligned}
\end{equation}
\no Taking now $t=0$ in \eqref{4.27}, expectation on both sides and the limit w.r.t. $n$ and $m$, we deduce that, 
\begin{equation}\label{4.29}
\limsup_{n,m\rightarrow \infty} \textbf{E}\Big[\int_{0}^T|\bar{Z}_s^{1n}-\bar{Z}_s^{1m}|^2ds\Big]\leq C\{\frac{\varepsilon^2}{2}+\frac{\varepsilon^4}{4}\},
\end{equation}
due to \eqref{estimate of zbn}, \eqref{estimate of X sigma} and the convergence of \eqref{conv ytn}. As $\varepsilon$ is arbitrary, then the sequence $(\bar{Z}^{1n})_{n\geq 1}$ is convergent in $\mathcal{H}_{T}^2(\R^m)$ to a process $Z^1$.
\medskip

Now, returning to inequality \eqref{4.27}, taking the supremum over $[0,T]$ and using BDG's inequality, we obtain that, 
\begin{align*}
&\textbf{E}\Big[\sup_{0\leq t\leq T}|\bar{Y}_t^{1n}-\bar{Y}_t^{1m}|^2+\int_{0}^T|\bar{Z}_s^{1n}-\bar{Z}_s^{1m}|^2ds\Big]\\
&\leq C\{\frac{\varepsilon^2}{2}+\frac{\varepsilon^4}{4}\}+\frac{1}{4}\textbf{E}\Big[\sup_{0\leq t\leq T}|\bar{Y}_t^{1n}-\bar{Y}_t^{1m}|^2\Big]+4\textbf{E}\Big[\int_{0}^T|\bar{Z}_s^{1n}-\bar{Z}_s^{1m}|^2ds\Big]
\end{align*}
which implies that
$$
\limsup_{n,m\rightarrow \infty}\textbf{E}\Big[\sup_{0\leq t\leq T}|\bar{Y}_t^{1n}-\bar{Y}_t^{1m}|^2\Big]=0,
$$
since $\varepsilon$ is arbitrary and \eqref{4.29}. Thus, the sequence of $(\bar{Y}^{1n})_{n\geq 1}$ converges to $\bar{Y}^1$ in $\mathcal{S}_{T}^2(\R)$ which is a continuous process.
\medskip

Next, note that since $\varsigma^{1n}(s,x)\geq e^{-C(1+|x|^\gamma)}$, then, $\bar{Y}_s^1>0$, $\forall s\leq T$, $\bp$-a.s.
\medskip

Finally, repeat the procedure for player $i=2$, we have also the convergence of $(\bar{Z}^{2n})_{n\geq 1}$ (resp. $(\bar{Y}^{2n})_{n\geq 1}$) in $\mathcal{H}_{T}^2(\R^m)$ (resp. $\mathcal{S}_{T}^2(\R)$) to $\bar{Z}^2$ (resp. $\bar{Y}^2=\varsigma^2(.,  X^{0,x}.)$).

\paragraph{\emph{Step 5.}}\label{step 5}\textit{The limit process $(\bar{Y}_t^i,\bar{Z}_t^i)_{0\leq t\leq T}$ (i=1,2) is the solution of BSDE (\ref{BSDE exp}). }

Indeed, we need to show that (for case $i=1$):
\bes
F^1(t,\xto)=G^1(t,\xto,\ybt,\ybtt,\zbt,\zbtt)\quad dt\otimes d\bp- a.s.
\ees
\no For $k\geq 1$, we have, 
\begin{align} \label{G}
&\textbf{E}\bigg[\int_{0}^T |G^{1n}(s, \xso, \ybsn, \ybsnt,\zbsn,\zbsnt)- G^1(s,\xso, \ybs, \ybst,\zbs,\zbst)|ds\bigg]\nonumber\\
\leq  \ &\textbf{E}\bigg[\int_{0}^T |G^{1n}(s, \xso, \ybsn, \ybsnt,\zbsn,\zbsnt) - G^1(s,\xso, \ybsn, \ybsnt,\zbsn,\zbsnt)|\cdot \nonumber\\ 
&\qquad \cdot\mathbbm{1}_{\{\frac{1}{k}< |\ybsn|+|\ybsnt|+|\zbsn|+|\zbsnt|<k\}}ds\bigg]\nonumber\\
&+\textbf{E}\bigg[\int_{0}^T |G^{1n}(s, \xso, \ybsn, \ybsnt,\zbsn,\zbsnt)- G^1(s,\xso, \ybsn, \ybsnt,\zbsn,\zbsnt)|\cdot\nonumber\\
&\qquad \cdot \mathbbm{1}_{\{|\ybsn|+|\ybsnt|+|\zbsn|+|\zbsnt|\geq k\}}ds\bigg]\nonumber\\
&+\textbf{E}\bigg[\int_{0}^T |G^{1n}(s, \xso, \ybsn, \ybsnt,\zbsn,\zbsnt)- G^1(s,\xso, \ybsn, \ybsnt,\zbsn,\zbsnt)|\cdot\nonumber\\
&\qquad \cdot \mathbbm{1}_{\{|\ybsn|+|\ybsnt|+|\zbsn|+|\zbsnt|\leq \frac{1}{k}\}}ds\bigg]\nonumber\\
&+\textbf{E}\bigg[\int_{0}^T |G^1(s, \xso, \ybsn, \ybsnt,\zbsn,\zbsnt)- G^1(s,\xso, \ybs, \ybst,\zbs,\zbst)|ds\bigg]\nonumber\\
&:= I_1^n+I_2^n+I_3^n+I_4^n,
\end{align}
\no where the sequence $I_1^n$, $n\geq 1$ converges to 0. On one hand, for $n\geq 1$, the point \eqref{abcd}-(b) in Step 1 implies that,
\beas
&\left|G^{1n}(s, \xso, \ybsn, \ybsnt,\zbsn,\zbsnt)- G^1(s,\xso, \ybsn, \ybsnt,\zbsn,\zbsnt)\right|\cdot\\
&\qquad \qquad\cdot\mathbbm{1}_{\{\frac{1}{k}<|\ybsn|+|\ybsnt|+|\zbsn|+|\zbsnt|<k\}}\\
& <C_fC_{\sigma}(1+|\xso|)k+C_h(1+|\xso|^{\gamma})k.
\eeas
\no On the other hand, considering the point \eqref{abcd}-(d), we obtain that, 
\beas
&\left|G^{1n}(s, \xso, \ybsn, \ybsnt,\zbsn,\zbsnt)- G^1(s,\xso, \ybsn, \ybsnt,\zbsn,\zbsnt)\right|\cdot \\
&\qquad\qquad \cdot\mathbbm{1}_{\{\frac{1}{k}<|\ybsn|+|\ybsnt|+|\zbsn|+|\zbsnt|<k\}}\\
&\leq\quad \sup\limits_{\substack{(y_s^1,y_s^2,z_s^1,z_s^2)\\\frac{1}{k}<|y_s^1|+|y_s^2|+|z_s^1|+|z_s^1|<k}}\left|G^{1n}(s,\xso,y_s^1,y_s^2,z_s^1,z_s^2)-G^1(x,\xso,y_s^1,y_s^2,z_s^1,z_s^2)\right|\rightarrow 0,
\eeas
\no as $n\rightarrow \infty$. Thanks to Lebesgue's dominated convergence theorem, the sequence $I_1^n$ of (\ref{G}) converges to 0 in $\mathcal{H}_{T}^1(\R)$.
\medskip

\no The sequence $I_2^n$ in \eqref{G} is bounded by $\frac{C}{k^{2(q-1)/q}}$ with $q\in (1,2)$. Actually, from point \eqref{abcd}-(b) and Markov's inequality, for $q\in (1,2)$, we get,
\begin{align*}
I_2^n &\leq C\Big\{\textbf{E}\Big[\int_{0}^T(1+|\xso|)^q|\zbsn|^q+(1+|\xso|^{\gamma})^q|\ybsn|^qds\Big]\Big\}^{\frac{1}{q}}\times\\
&\qquad\qquad\qquad\times \Big\{\textbf{E}\Big[\int_{0}^T \mathbbm{1}_{\{|\ybsn|+|\ybsnt|+|\zbsn|+|\zbsnt|\geq k\}}ds\Big]\Big\}^{\frac{q-1}{q}}\\
&\leq  C\Big\{\textbf{E}\Big[\int_{0}^T|\zbsn|^2ds\Big]+\textbf{E}\Big[\int_{0}^T (1+|\xso|)^{\frac{2q}{2-q}}ds\Big]\\
&\qquad\qquad\qquad+\textbf{E}\Big[\int_{0}^T|\ybsn|^2ds\Big]+\textbf{E}\Big[\int_{0}^T(1+|\xso|)^{\gamma\cdot\frac{2q}{2-q}}ds\Big]\Big\}^{\frac{1}{q}}\times \\
&\quad\times \frac{\Big\{\textbf{E}\Big[\int_{0}^T|\ybsn|^2+|\ybsnt|^2+|\zbsn|^2+|\zbsnt|^2ds\Big]\Big\}^{\frac{q-1}{q}}}{(k^2)^{\frac{q-1}{q}}}\\
&\leq \frac{C}{k^{\frac{2(q-1)}{q}}}.
\end{align*}
\no The last inequality is a straightforward result of the estimates (\ref{estimate of theta})(\ref{ybn belongs to sp}) and (\ref{estimate of zbn}). 
\medskip

The third sequence $I_3^n$ in \eqref{G} is bounded by $C/k$ with constant $C$ independent on $k$. Indeed, by \eqref{abcd}-(b) and \eqref{estimate of X sigma},
$$
I_3^n\leq \textbf{E}\Big[\int_{0}^TC_fC_{\sigma}(1+|X_s^{0,x}|)\frac{1}{k}+C_h(1+|X_s^{0,x}|^{\gamma})\frac{1}{k}ds\Big]\leq C/k.
$$

The fourth sequence $I_4^n$, $n\geq 1$ in \eqref{G} also converges to $0$, at least for a subsequence. Actually, since the sequence $(\bar{Z}^{1n})_{n\geq 1}$ converges to $\bar{Z}^1$ in $\mathcal{H}_{T}^2(\R^m)$, then there exists a subsequence $(\bar{Z}^{1n_k})_{k\geq 1}$ such that it converges to $\bar{Z}^1$, $dt\otimes d\bp$-a.e., and furthermore, $\sup_{k\geq 1}|\bar{Z}_t^{1n_k}(\omega)|\in \mathcal{H}_{T}^2(\R)$. On the other hand, $(\bar{Y}^{1n_k})_{k\geq 1}$ converges to $\bar{Y}^1>0$, $dt\otimes d\bp$-a.e.. Thus, taking the continuity of function $G^1(t,x,y^1,y^2,z^1,z^2)$ w.r.t $(y^1,y^2,z^1,z^2)$ into account, we obtain that
\beas
G^1(t,\xto,\bar{Y}_t^{1n_k},\bar{Y}_t^{2n_k},\bar{Z}_t^{1n_k},\bar{Z}_t^{2n_k})\rightarrow_{k\rightarrow\infty} G^1(t,\xto,\ybt,\ybtt,\zbt,\zbtt)\quad \ dt\otimes d\bp-a.e.
\eeas
In addition, considering that
\bes
\sup_{k\geq 1}|G^1(t,\xto,\bar{Y}_t^{1n_k},\bar{Y}_t^{2n_k},\bar{Z}_t^{1n_k},\bar{Z}_t^{1n_k})| \in \mathcal{H}_{T}^q(\R)\text{ for } 1<q<2,
\ees
which follows from (\ref{F1n lp}). Finally, by the dominated convergence theorem, one can get that,
\beas
G^1(t,\xto,\bar{Y}_t^{1n_k},\bar{Y}_t^{2n_k},\bar{Z}_t^{1n_k},\bar{Z}_t^{2n_k})\rightarrow_{k\rightarrow \infty} G^1(t,\xto,\ybt,\ybtt,\zbt,\zbtt)\quad \text{in}\ \mathcal{H}_{T}^q(\R),
\eeas
which yields to the convergence of $I_4^n$ in (\ref{G}) to $0$.
\medskip

\no It follows that the sequence $(G^{1n}(t,\xto,\ybtn,\ybtnt,\zbtn,\zbtnt)_{0\leq t\leq T})_{n\geq 1}$ converges to \\
 $(G^1(t,\xto,\ybt,\ybtt,\zbt,\zbtt))_{0\leq t\leq T}$ in $\mathcal{L}^1([0,T]\times \Omega, dt\otimes d\bp)$ and then \\$F^1(t,\xto)=G^1(t,\xto,\ybt,\ybtt,\zbt,\zbtt), dt\otimes d\bp$-a.e. In the same way, we have, $F^2(t, X_t^{0,x})=G^2(t, X_t^{0,x}, \bar{Y}_t^1, \bar{Y}_t^2, \bar{Z}_t^1, \bar{Z}_t^2)$, $dt\otimes d\bp$-a.e. Thus, the processes $(Y^i, Z^i)$, $i=1,2$ are the solutions of the backward equation (\ref{BSDE exp}).

\paragraph{\emph{Step 6.}}\label{Step 6} \textit{The solutions $(Y_t^i,Z_t^i),\ i=1,2$ for BSDE \eqref{main BSDE in th} exist.}
\medskip

Obviously observed from (\ref{growthu}) that $\ybt$ is strict positive which enable us to obtain the solution of the original BSDE (\ref{main BSDE in th}) by:
\bes
\left\{
\begin{aligned}
Y_t^1&=\ln\ybt;\\
Z_t^1&=\frac{\zbt}{\ybt},\quad t\in [0,T].
\end{aligned}
\right.
\ees 
\no The same illustrate about the case $i=2$ gives the existence of solution $(Y^2,Z^2)$ for BSDE \eqref{main BSDE in th}. Besides, it follows from the fact $\bar{Y}_t^i=\varsigma^i(t, X_t^{0,x})$ and for each $(t,x)\in [0,T]\times \R^m$, $e^{-C(1+|x|^{\gamma})}\leq \varsigma^i(t,x)\leq e^{C(1+|x|^{\gamma})}$ with $1<\gamma<2$, that $Y^i$ also has a representation through a deterministic function $\varpi^i(t,x)=\ln \varsigma^i(t,x)$ which is of subquadratic growth, i.e. $|\varpi^i(t,x)|\leq C(1+|x|^{\gamma})$ with $1<\gamma<2$, $i=1,2$. The proof is completed.

\end{proof}

\end{document}